\def\real{\mathbb{R}}
\def\torus{\mathbb{T}}
\title[State-dependent delay: computation]{Numerical computation of
  periodic orbits and isochrons for state-dependent delay perturbation
  of an ODE in the plane}
\author[J. Gimeno]{Joan Gimeno} 
\address{Department of Mathematics, University of Rome Tor Vergata,
  Via della Ricerca Scientificia 1, 00133 Rome (Italy)}
\email{joan@maia.ub.es}
\author[J. Yang]{Jiaqi Yang} 
\address{School of Mathematics, Georgia Institute of Technology, 686
  Cherry St. Atlanta GA. 30332-0160} \email{jyang373@gatech.edu}
\author[R. de la Llave]{Rafael de la Llave} 
\address{School of Mathematics, Georgia Institute of Technology, 686
  Cherry St. Atlanta GA. 30332-0160}
\email{rafael.delallave@math.gatech.edu}
\keywords{State-dependent delay, perturbation theory, parameterization method}
\subjclass[2010]{37M05, 65T50, 37M15}
\date{\today}
\thanks{ J.~Y. and R.~L. were partially supported by NSF grant
  DMS-1800241.  J.~G. acknowledges financial support from Spanish
  grants MDM-2014-0445, PGC2018-100699-B-I00 (MCIU/AEI/FEDER, UE),
  Catalan grant 2017 SGR 1374 and Italian grant MIUR-PRIN 20178CJA2B
  ``New Frontiers of Celestial Mechanics: theory and Applications''.
  This research was also funded by H2020-MCA-RISE \#734577 which
  supported visits of J.~G. to Georgia Inst. of Technology and of
  J.~Y. to Univ. of Barcelona.  J.~G. thanks School of Mathematics GT
  for hospitality Springs 2018 and 2019 and Fall 2019. }
\newtheorem{thm}{Theorem}[section]
\newtheorem{meta-thm}[thm]{Meta-Theorem}
\newtheorem{remark}[thm]{Remark}
\newtheorem{pro}[thm]{Proposition}
\newtheorem{lem}[thm]{Lemma}
\newtheorem{alg}[thm]{Algorithm}
\newcommand{\ve}[1]{\boldsymbol{#1}}
\newcommand{\I}{\ve i}
\let\Re\relax\DeclareMathOperator{\Re}{Re}
\let\Im\relax\DeclareMathOperator{\Im}{Im}
\begin{document}

\begin{abstract}

We present algorithms and their implementation to compute limit cycles
and their isochrons for state-dependent delay equations (SDDE's) which
are perturbed from a planar differential equation with a limit cycle.

Note that the space of solutions of an SDDE is infinite dimensional.
We compute a two parameter family of solutions of the SDDE which
converge to the solutions of the ODE as the perturbation goes to zero
in a neighborhood of the limit cycle.

The method we use formulates functional equations among periodic
functions (or functions converging exponentially to periodic).  The
functional equations express that the functions solve the SDDE.
Therefore, rather than evolving initial data and finding solutions of
a certain shape, we consider spaces of functions with the desired
shape and require that they are solutions.

The mathematical theory of these invariance equations is developed in
a companion paper, which develops \emph{a posteriori} theorems.  They
show that, if there is a sufficiently approximate solution (with
respect to some explicit condition numbers), then there is a true
solution close to the approximate one.  Since the numerical methods
produce an approximate solution, and provide estimates of the
condition numbers, we can make sure that the numerical solutions we
consider approximate true solutions.

In this paper, we choose a systematic way to approximate functions by
a finite set of numbers (Taylor-Fourier series) and develop a toolkit
of algorithms that implement the operators -- notably composition --
that enter into the theory.  We also present several implementation
results and present the results of running the algorithms and their
implementation in some representative cases.
\end{abstract}

\maketitle
\clearpage
% \tableofcontents
% \clearpage
\section{Introduction}
\label{sec.intro}

Many phenomena in nature and technology are described by limit cycles
and by now there is an extensive mathematical theory of them
\cite{Minorsky,AndronovVK87}.

These limit cycles often arise in feedback loops between effects that
pump and remove energy in ways that depend on the state of the system.
When the feedback happens instantaneously, these phenomena are modeled
by an ordinary differential equation (ODE). Nevertheless, in many real
phenomena, the feedback takes time to start acting.  In such cases,
the appropriate models are delay differential equations (DDE's) in
which the time derivative of a state is given by an expression which
involves the state at a previous time.  Such delays are documented to
be important in several areas of science and technology (e.g. in
electrodynamics, population dynamics, neuroscience, circuits,
manufacturing, etc. See \cite{Krisztin2006} for a relatively recent
survey documenting many areas where DDE's are important models).

Note that, from the mathematical point of view, adding even a small
delay term in the ODE model is a very singular perturbation since the
nature of the problem changes drastically. Notably, the natural phase
spaces in delay equations are infinite dimensional (there is some
discussions about what are the most natural ones) rather than the
finite dimensional phase spaces of ODE.

One would heuristically expect that, if the delay term is a small
quantity, there are solutions of the delay problem that resemble the
solutions of the unperturbed ODE.  Due to the singular perturbation
nature of the problem, justifying this intuitive idea is a nontrivial
mathematical task. Of course, besides the finite dimensional set of
solutions that resemble the solutions of the ODE, one expects many
other solutions, which may be very different.  \cite{Krisztin2006}.

The recent rigorous paper \cite{Jiaqi2020}, describes a formalism to
study the effect of introducing a delay to an equation in the plane
with a limit cycle.  The paper \cite{Jiaqi2020} shows that, in some
appropriate sense, the solutions of the ordinary differential equation
persist.  The method in \cite{Jiaqi2020} is constructive since it is
based on showing that the iterations of an explicit operator converge.

The goal of this paper is to present algorithms and implementation
details for the mathematical arguments developed in \cite{Jiaqi2020}.
One also expects that solutions we compute -- and which resemble the
solutions of the ODE -- capture the full dynamics of the SDDE in the
sense that the solutions of the SDDE in a neighborhood converge to
this finite dimensional solution family very fast.

The algorithms consist in specifying discretizations for all the
functional analysis steps in \cite{Jiaqi2020}.  We do not present
rigorous estimates on the effects of discretizations (they are in
principle applications of standard estimates), but we present analysis
of running times.  We have implemented the algorithms above and report
the results of running them in some representative examples.  In our
examples, one can indeed obtain very accurate solutions in a few
minutes in a standard today's laptop.  Thanks to the \emph{a
  posteriori} theorems in \cite{Jiaqi2020}, we can guarantee that
these solutions correspond to true solutions of the problem.

We recall that the method of \cite{Jiaqi2020} consists in bypassing
the evolution and formulating the existence of periodic orbits (and
the solutions converging to them) as the solutions in a class of
functions with periodicity. Furthermore, the paper \cite{Jiaqi2020}
establishes an \emph{a posteriori} theorem which states that given a
sufficiently approximate solution of the invariance equation, there is
a true solution close to it. To be more precise, an approximate
solution is sufficiently approximate, if the error is smaller than an
explicit expression involving several properties of the approximate
solution (commonly called condition numbers).

The numerical methods developed and run here, produce an approximate
solution and obtain estimates on the condition numbers. So, we can be
quite confident that the solutions produced by our numerical methods
correspond to true solutions.

\begin{remark}
 Although the results in \cite{Jiaqi2020} have been detailed for the
 case of SDDE's, they also apply without major modifications to
 advanced or even mixed differential equations.
\end{remark}

\begin{remark} 
The paper \cite{Jiaqi2020} includes a theory for different
regularities of the differential equation, but in this numerical
paper, we will only formulate results for analytic systems. Since we
will specify which derivatives appear in the calculations, it is clear
that the algorithms apply also for problems with finite
differentiability.
\end{remark}

\begin{remark} 
One of the consequences of the approach in \cite{Jiaqi2020} is that
one can easily obtain smooth dependence on parameters for the
solutions.  Note that if one studies the periodic orbits as fixed
points of an evolution operator, one needs to study the smooth
dependence of the solutions on the initial data and on parameters,
which is a delicate question for general solutions.  See \cite[Chapter
  3.7]{HaleVL}.
\end{remark}

\begin{remark}
The \emph{a posteriori} results justify that the approximate solution
is independent of the method for which it has been produced.

Besides the numerical approximations, it is also customary in applied
mathematics to produce approximate solutions using formal asymptotic
expansions.  For the problem at hand, the paper \cite{CasalCL2019}
develops formal asymptotic expansions of the periodic solutions in
powers of the term in the delay.

% Cohomologic equations with coefficients given by the unperturbed
% system. The RHS is given by automatic differentiation.  with always
% computaatble linear terms
The expansions in \cite{CasalCL2019} are readily computable with the
methods presented here. They can be taken as starting points for the
fixed point method in \cite{Jiaqi2020}. Moreover, the \emph{a
  posteriori} results of \cite{Jiaqi2020} show that these expansions
are asymptotic in a very strong sense.
\end{remark}

\begin{remark}
The paper \cite{Jiaqi2020} also includes some local uniqueness
statements of the solutions (under the condition that the solutions
have a certain shape). Hence the numerically computed approximate
solutions of the invariance equation identify a unique solution, which
is unambiguous. This uniqueness is crucial to compare different
numerical runs as well as to obtain smooth dependence on parameters.

The uniqueness in \cite{Jiaqi2020} is somewhat subtle.  The limit
cycle is unique as well as the Taylor expansions of isochrons (their
parameterizations are unique once we fix origins of coordinates and
scales).  On the other hand, the full isochrons are unique only when
one specifies a cut-off. Similar effects happen in the study of center
manifolds \cite{Sijbrand85}.

From the numerical point of view, we only compute the limit cycle and
a finite Taylor expansion of the isochrons. The error of the reminder
of the Taylor expansion is indeed very small (much smaller than other
sources of numerical error, which are already small).
\end{remark}

\subsection{Organization of the paper}
The paper is organized in an increasing level of details trying to
guide the reader from the general steps of the algorithms to the more
specialized and hardest steps of them.

First of all, we detail in section~\S\ref{sec.overview} an overview of
the method developed in \cite{Jiaqi2020}. In particular, we first
introduce the situation of the unperturbed problem in
\S\ref{sec.overview-unperturbed} in order to move to the perturbed
problem in \S\ref{sec.overview-perturbed}.  This will lead to the
explicit expression of the invariance equation in
\S\ref{sec.overview-inv} and the periodicity and normalization
conditions in \S\ref{sec.overview-pc} and \S\ref{sec.overview-nc}.

Our results start from the unperturbed case in \cite{Huguet2013}, we
will summarize in \S\ref{sec.unperturbed-case} the steps and add
practical comments for numerically computing a parameterization in the
unperturbed case.

The algorithms that allow to solve the invariance equation introduced
in \S\ref{sec.overview-inv} are fully detailed in section
\S\ref{sec.perturbed-case}.

The numerical composition of periodic mappings as well as its
computational complexity needs special care. Hence,
section~\S\ref{sec.composing-periodic-maps} explains in detail such a
process in a Fourier representation.

Finally, section~\S\ref{sec.experiment} reports the results of some
numerical experiments.

\section{Overview of the problem and the method}
\label{sec.overview}
\subsection{The parameterization method for limit cycles
  and their isochrons in ODE's}
\label{sec.overview-unperturbed}

Our starting point is the main result in \cite{Huguet2013}, which we
recall informally (omitting precisions on regularity, domains of
definition, etc).

Given an analytic ordinary differential equation (ODE) in the plane
\begin{equation} \label{ODE}
\dot x = X _0(x)
\end{equation}
with a (stable) limit cycle, there is an analytic local diffeomorphism
$K$, in particular a local change of variables, defined from
$\mathbb{T} \times [-1,1]$ to $\mathbb{R}^2$, a frequency $\omega _0 >
0$ and a rate $\lambda _0 < 0$ such that
\begin{equation}
\label{eq.unperturbed-inv}
 X _0 \circ K(\theta, s) = (\omega _0 \partial _\theta + \lambda _0s
 \partial _s)K(\theta, s) = DK(\theta, s)
\begin{pmatrix}
\omega _0 \\ \lambda _0 s
\end{pmatrix}.
\end{equation}

Hence, if $\theta$ and $s$ satisfy the very simple ODE
\begin{equation}\label{simple} 
\begin{split} 
&\dot \theta(t) = \omega _0,   \\
& \dot s(t) = \lambda _0 s(t), 
\end{split}
\end{equation}
then 
\begin{equation*} 
 x(t)=K(\theta(t),s(t))
\end{equation*} 
is a solution of \eqref{ODE} in a neighborhood of the limit cycle.

Therefore, the paper \cite{Huguet2013} trades finding all the
solutions near the limit cycle of \eqref{ODE} for finding, $K$, $
\omega_0$ and $\lambda_0$ solving \eqref{eq.unperturbed-inv}.  The
paper \cite{Huguet2013} also develops efficient algorithms for the
study of \eqref{eq.unperturbed-inv}, the so-called invariance
equation.

The key idea of the formalism in \cite{Jiaqi2020} consists in
accommodating the delay by just changing the equation
\eqref{eq.unperturbed-inv}. We will obtain a modified functional
equation, which involves non-local terms that reflect the delay in
time.  This equation was treated in \cite{Jiaqi2020}. Hence, we will
produce a two dimensional family of solutions of the delay problem
which resemble the solutions of the unperturbed problem \eqref{ODE}.

The solutions we construct are analogues for the SDDE of the limit
cycle as well as the solutions that converge to the limit cycle
exponentially fast (notice that for the simple ODE, these are all the
solutions with initial data in a neighborhood of the limit cycle).

The set $I_{\theta_0} = \{ K(\theta_0, s_0) \colon s_0 \in [-1,1]\}$
is called in the biology literature the \emph{``isochron''} of
$\theta_0$ because the orbit of a point in $I_{\theta_0}$ converges to
the limit cycle with a phase $\theta_0$. See \cite{Winfree}.

\begin{remark}
The theory of normally hyperbolic manifolds shows that the isochrons
are the same as the stable manifolds of points \cite{Guckenheimer}
(see also \cite{ChiconeL04} for generalizations beyond normal
hyperbolicity).

Therefore, in the ODE case, isochrons and stable manifolds can be used
interchangeably. In the SDDE case, however, the stable manifolds are
infinite dimensional objects.  The solutions we construct are finite
dimensional families.  To avoid confusion with the stable manifolds in
\cite{HaleVL}, we prefer to maintain the name isochrons to refer to
the solutions we construct.  Thus, the isochrons we constuct are
subsets of the (infinite dimennsional) manifolds constructed in
\cite{HaleVL}.  As a matter of fact, they are slow manifolds, they
correspond to the least stable eigenvalues.  One expects that, in
applications, the isochrons will be the most observable solutions
since they correspond to the modes that decrease the slowest so that
any solution will converge to the isochron much faster than the
isochron converges to the limit cycle (an analogue with what happens
in ODE in a stable node).
\end{remark}

\subsection{The perturbed problem} 
\label{sec.overview-perturbed}
We consider now a perturbation of \eqref{ODE} of the form
\begin{equation}\label{eq.perturbed-SDDE}
  \begin{split}
    \dot x(t) =& X\bigl(x(t), \varepsilon x(t - r(x))\bigr)\\
     \coloneq &
    X(x(t),0) + \varepsilon P(x(t), x(t-r(x)),\varepsilon)
    \end{split}
\end{equation}
where $0 < \varepsilon \ll 1$, $X(x(t),0) = X _0(x(t))$ and
$\varepsilon P(x(t), x(t-r(x)),\varepsilon) \coloneq X\bigl(x(t),
\varepsilon x(t - r(x))\bigr)- X(x(t),0)$ and the function $r$ is
positive and as smooth as we need, hence bounded in compact sets.

The equation \eqref{eq.perturbed-SDDE} is a state-dependent delay
differential equation (SDDE) for $\varepsilon \ne 0$. For
typographical reasons we will denote $\widetilde x(t) \coloneq x(t -
r(x))$.

\subsection{The invariance equation in the perturbed problem}
\label{sec.overview-inv}

Let $\tilde{\mathbb{T}}$ be the universal cover of the 1-dimensional
torus $\mathbb{T}$ and let us consider $K\colon
\tilde{\mathbb{T}}\times [-1,1]\rightarrow \mathbb{R}^2$, the
frequency as $\omega _0$ and the rate as $\lambda _0$ which solve
\eqref{eq.unperturbed-inv}. They correspond to the case
$\varepsilon=0$ in \eqref{eq.perturbed-SDDE}.

In analogy with the ODE case, we want to find a $W(\theta, s)$ with
periodicity in the first variable and numbers $\omega $ and $ \lambda$
such that for all $\theta $ and $ s$,
\begin{equation}\label{parameterization}
 x(t) = K\circ W(\theta+\omega t, se^{\lambda t})
\end{equation}  
is a solution of~\eqref{eq.perturbed-SDDE}.

The mapping $W$ gives us a parameterization of the limit cycle with
its isochrons via $K\circ W(\theta,s)$.  That is, the limit cycle will
be represented by the set $\{K \circ W (\theta, 0)\colon \theta \in
\mathbb{T} \}$ and the isochron associated to the angle $\theta $ in
$\mathbb{T}$ will be $\{K\circ W (\theta, s)\colon s \in [-s _0, s
  _0]\}$ where $s _0$ denotes a region of validity in $s$ in the
solution \eqref{parameterization}.

Note that, heuristically (and it is also shown in \cite{Jiaqi2020})
$W$ is close to the identity map and $\omega $ and $ \lambda$ are
close to the values in the unperturbed case.  Hence, we will produce a
two-dimensional family of solutions of the delayed equation
\eqref{eq.perturbed-SDDE} which resemble the solutions of the ODE.

\begin{remark}
Since the phase space of the delay equation is infinite dimensional,
we expect that there are many more solutions of
\eqref{eq.perturbed-SDDE}.

Based on the theory of \cite[Chapter 10]{HaleVL}, we know that the
periodic orbit in the phase space of the SDDE is locally unique, hence
the solution we produce has to agree with the periodic solution
produced in \cite[Theorem 4.1]{HaleVL}.

The theory of delay equations \cite[Chapter 10]{HaleVL} (specially
Theorem 3.2) produces stable (or strong stable) manifolds in the
(infinite dimensional) phase space.  The stable manifolds produced in
\cite{HaleVL} are infinite dimensional. Note that, since the evolution
operators are compact, most of the eigenvalues of the evolution are
very small, in particular, smaller than the $\lambda$ we will select
later, so that the solutions in the stable manifold converge to the
space of solutions produced here in a very fast way.

\begin{comment}
The solutions we produce are very different from the stable manifolds
in \cite{HaleVL}.  Even if the solutions we construct are analogues of
the stable manifolds for the ODE, in the infinite dimensional problem,
they are just submanifolds of the infinite dimensional stable
manifold. Hence, we prefer to use the name isochrons to avoid the
confusion with the stable manifolds of the SDDE in an infinite
dimensional space.
\end{comment}
\end{remark}

Imposing that the tuple $(W, \omega, \lambda)$ is such that
\eqref{parameterization} is a solution of \eqref{eq.perturbed-SDDE}
and knowing that the tuple $(K, \omega _0, \lambda _0)$ is also a
solution of \eqref{eq.unperturbed-inv} but for $\varepsilon = 0$, then
\begin{equation}
\label{eq.inv-deduction}
 DK \circ W D W  = 
 DK \circ W 
 \begin{pmatrix}
  \omega _0 \\ \lambda _0 W _2
 \end{pmatrix} + 
 \varepsilon P (K \circ W, K \circ \widetilde W, \varepsilon),
\end{equation}
where $W _2$ refers to the second component of $W$.

Now, since $K$ is a local diffeomorphism, it also acts as a change of
variable. In particular, we can premultiply \eqref{eq.inv-deduction}
by $(DK\circ W)^{-1}$ to get the functional equation, whose unknowns
are $W\equiv (W _1,W _2)$, $\omega$ and $\lambda$. That functional
equation will be called invariance equation,
\begin{equation}\label{eq.inv}
(\omega \partial _\theta  + \lambda s \partial _s)W(\theta, s) = 
\begin{pmatrix}
\omega _0 \\ \lambda _0 W _2(\theta, s)
\end{pmatrix} + \varepsilon Y\bigl(W(\theta, s), \widetilde W(\theta, s), 
\varepsilon\bigr),
\end{equation}
where we use the shorthand
\begin{equation*} 
\label{abreviations} 
\begin{split} 
 \widetilde W(\theta,s) &\coloneq W\bigl(\theta - \omega r(K\circ W),
 s e^{-\lambda r(K \circ W)}\bigr), \\ Y\bigl(W(\theta, s), \widetilde
 W(\theta, s), \varepsilon\bigr) &\coloneq (DK\circ W(\theta, s))^{-1}
 P\bigl(K\circ W(\theta, s), K\circ \widetilde W(\theta, s),
 \varepsilon\bigr).
\end{split} 
\end{equation*} 
% Note that the $Y$ is given by the problem and that the unknowns of
% \eqref{eq.inv} are $W$, $\omega$ and $\lambda$.

The equation \eqref{eq.inv} will be the center of our attention. Let
us start by making some preliminary remarks on it.

We have ignored the precise definition of the domain of the function
$W$.  We need the range of $W$ to be contained in the domain of
$K$. Note also that it is not clear that the domain of the RHS can
match the domain of the LHS of \eqref{eq.inv}. As it turns out, this
will not matter much for our treatment providing a small enough
$\varepsilon$ (see \cite{Jiaqi2020} for a detailed discussion).

The equation \eqref{eq.inv} is underdetermined. That means, if $W,
\omega$ and $\lambda$ solve equation \eqref{eq.inv}, then $W_{\sigma,
  \eta}, \omega$ and $\lambda$ also solve the same equation with
\begin{equation} \label{eq.shift} 
W_{\sigma, \eta}(\theta, s) = W(\theta + \sigma, \eta s).
\end{equation} 
The parameter $\sigma$ and $\eta$ correspond respectively to choosing
a different origin in the angle coordinate $\theta$ and a different
scale of the parameter $s$.

Even if all these solutions in \eqref{eq.shift} are mathematically
equivalent, we anticipate that choosing a different $\eta$ can change
the reliability of the numerical algorithms.

In \cite{Jiaqi2020}, it is shown that the solutions in the family
\eqref{eq.shift} are locally unique.  That is, all the solutions of
the invariance equation \eqref{eq.inv} are included in
\eqref{eq.shift}.  In equations \eqref{eq.nc1} and \eqref{eq.nc2} we
introduce normalization conditions that specify the parameters in
\eqref{eq.shift}. This is useful for numerics since it allows to
compare easily solutions obtained in different runs with different
discretization sizes.

\begin{remark}
From the point of view of analysis, one of the main difficulties of
the equation \eqref{eq.inv} is that it involves a function composed
with itself (hence the operator is not really differentiable).  Also
the term $\widetilde W$ does not have the same domain as $W$.  We
refer to \cite{Jiaqi2020} for a deeper discussion in the composition
domain.

Similar problems appear in the treatment of center manifolds
\cite{Lanford73, Carr} and indeed, in \cite{Jiaqi2020} there are only
results for finite differentiable solutions and the solutions obtained
may depend on cut-offs and extensions taken to solve the
problem.\footnote{On the other hand, the coefficients of the expansion
  in powers of $s$ are unique and do not depend on cut-offs and
  extensions.}

Based on the experience with center manifolds, we believe that indeed,
the solutions could only be finitely differentiable and that there are
different solutions of the invariance equation (depending on the
extensions considered).
\end{remark}

\begin{remark} 
In the language of ergodic theory, for those familiar with it, the
results of \cite{Jiaqi2020} can be described as saying that there is a
\emph{factor} in the (infinite dimensional) phase space of the SDDE
which is a two dimensional flow with dynamics close to the dynamics of
the ODE.

In this paper, we will compute numerical approximations of the map
giving the semiconjugacy as well as the new dynamics of such a
\emph{factor}.
\end{remark}

\subsection{Format of solution for the invariance equation \eqref{eq.inv}} 
It is shown in \cite{Jiaqi2020} that, for small $\varepsilon$, one can
construct smooth solutions of \eqref{eq.inv} of the form
\begin{equation}
\label{eq.W-power-s}
 W(\theta, s) = W^0(\theta) + W^1(\theta) s + \sum _{j = 2} ^{n}
 W^j(\theta) s^j + W^>(\theta, s)
\end{equation}
where $W ^j \colon \mathbb{T} \rightarrow \mathbb{T}\times \mathbb{R}$
and $W^> \colon \mathbb{T} \times [-s _0, s _0] \rightarrow
\mathbb{T}\times \mathbb{R}$ with $W^>(\theta, s) = O(s^{n+1})$ and
for some $s _0 > 0$.

As we will see in more detail later on, if one substitutes
\eqref{eq.W-power-s} into \eqref{eq.inv} and matches powers in $s$,
one gets a finite set of recursive equations for the coefficients
$W^j$ of the expansion (and for $\omega$, $\lambda$).  We will deal
with these equations in detail later.  Note that this will require a
discretization of $W^j$, which are only functions of the angle
$\theta$.

\subsection{The equations for terms of the expansion of \texorpdfstring{$W$}{W}}
\label{sec.expansion}
Assume for the moment that $W^0$ and $ W^1$ have already been
computed. Then we can substitute the expansion \eqref{eq.W-power-s} of
$W$ in powers of $s$ into the invariance equation
\eqref{eq.inv}. Matching the coefficients of the powers of $s$ on both
sides we obtain a hierarchy of equations for $W^j$, $j = 2,3,\ldots $.

The equations for $W^j$ involve just $W^0, \dotsc, W^{j-1}$. Hence,
they can be studied recursively.  In \cite{Jiaqi2020} it is shown
that, if we know $W^0,\dotsc, W^{j -1}$, it is possible to find $W^j$
in a unique way and, hence, we can proceed to solve the equations
recursively.  In this paper, we show that there are precise algorithms
to compute these recursions.  We also report results of implementation
in some cases.

Note that $W^j$, $j = 0,\ldots, n$ in \eqref{eq.W-power-s} are
functions only of $\theta$. The function $W^>$ depends both on
$\theta$ and $s$ but vanishes at high order in $s$ and does not enter
in the equations for $j = 0, \dotsc, n$.

As it frequently happens in perturbative expansions, the low order
equations are special.  The equation for $W^0$ -- which gives the
periodic solution that continues the limit cycle -- also determines
$\omega$.  The equation for $W^1$ also determines $\lambda$. The
equations for $W^j$, $j = 2,\dotsc, n$ are all similar and involve
solving the same operator (with different terms).

In this paper, we will only consider the computation of the $W^j$, $j
=0, 1, \ldots $.  The term $W^{>}$, is estimated in \cite{Jiaqi2020}
and it is not only high order in $s$ but also actually small in rather
large balls.

Note that even if the $W^j$ are unique (up to the parameters in
\eqref{eq.shift}), the $W^{>}$ depends on properties of the extension
considered.  This is, of course very reminiscent of what happens in
the theory of center manifolds \cite{Sijbrand85} .  For numerical
studies of expansions of center manifolds we refer to \cite{BeynK98,
  Jorba99,PotzscheR06} and detailed estimates of the truncation in
\cite{CapinskiR12}.  The numerical considerations about the effect of
the truncation apply with minor changes to our case.

\subsection{Periodicity conditions} \label{sec.overview-pc}

From the point of view of implementation in computers, it is
convenient to think of the functions $K$ and $W$ in \eqref{eq.inv}
which involve angle variables (and which range on angles), as real
functions with boundary conditions (in mathematical language this is
described as taking lifts).  Hence, we take
\begin{equation}\label{eq.pc} 
\begin{split}
 K(\theta + 1, s) &= K(\theta, s), \\
 W(\theta + 1, s) &= W(\theta, s) +
 \begin{pmatrix}1\\ 0 \end{pmatrix}.  
\end{split} 
\end{equation}

Notice that we are normalizing the angles to run between $0$ and
$1$. Very often, the angles are taken to run in $[0, 2 \pi)$.

The periodicity conditions in \eqref{eq.pc} indicate the second
component of $W$ is periodic (it describes a radial coordinate) in
$\theta$ while the first component increases by $1$ when $\theta$
increases by $1$ (it describes an angle). So that the circle described
by increasing $\theta$ makes the angle in the coordinate go around, so
that it is a non-contractible circle in the angle.

For the expansion of $W$ in powers of $s$ as in \eqref{eq.W-power-s},
the periodicity conditions amount to:

\begin{equation}\label{eq.pc-W}
W^0(\theta+1) = W^0(\theta) + 
\begin{pmatrix}
1 \\ 0
\end{pmatrix} \quad \text{and} \quad 
W^j(\theta+1) = W^j(\theta)  \quad \text{for }j\geq 1.
\end{equation}

In the numerical analysis, there are many well-known ways to
discretize periodic functions.  We will use Fourier series, but there
are also other alternatives such as periodic splines.

In general, for functions $\Psi$ with $\Psi(\theta +1) =
\Psi(\theta)+1$, we define ${\widetilde \Psi}(\theta) = \Psi(\theta) -
\theta$ which is a periodic function, i.e. for all $\theta$,
${\widetilde \Psi}(\theta+1) = {\widetilde \Psi}(\theta)$. Then we
will discretize $\widetilde \Psi$ and rewrite the functional equations
so that this is the only unknown.

\subsection{Normalization of the solutions} \label{sec.overview-nc}
As indicated in the discussion, the invariance equation has two
obvious sources of indeterminacy: One is the choice of the origin of
the variable $\theta$ (the $\sigma$ in \eqref{eq.shift}) and the other
is the choice of the scale of the variable $s$ (the $\eta$ in
\eqref{eq.shift}) .  In \cite{Jiaqi2020} it is shown that these are
the only indeterminacies and that once we fix them, we can get any
other solution by applying \eqref{eq.shift}.

A convenient way to fix the origin of $\theta$ is to require
\begin{equation} \label{eq.nc1}
\int_0^1 \bigl[ \partial_\theta W^0_1 (\theta,0) \, W_1(\theta, 0)
  \bigr] \, d \theta = a
\end{equation}
where $W^0$ is an initial approximation and $a$ is a real number,
typically it is closed to $1$.  This normalization is easy to compute
and is rather sensitive since, when we move in the family
\eqref{eq.shift}, the derivative with respect to the shift is a
positive number.

The normalization of the origin of coordinates, has no numerical
consequences except for the possibility of comparing the solutions in
different runs. The solutions corresponding to different
normalizations have very similar properties. The numerical algorithm
\ref{alg.s0} in its step \ref{alg.s0-pc} leads to a small drift in the
normalization in each iteration, but it is guaranteed to converge to
one of the solutions in \eqref{eq.shift}.

The second normalization is just a choice of the eigenvector of an
operator. We have found it convenient to take
\begin{equation}\label{eq.nc2}
\int_{0}^{1} \partial _s W _2(\theta, 0)\, d\theta = \rho
\end{equation}
with a real $\rho \ne 0$.

We anticipate that changing the value of $\rho$ is equivalent to
changing $s$ into $b s$ where $b$ is commonly named \emph{scaling
  factor}.

All the choices of $\rho$ are mathematically equivalent -- they amount
to setting the scale of the parameter $s$ --.  The choice of this
normalization, however, affects the numerical accuracy
dramatically. Notice that if we change $s$ into $b s$, the
coefficients $W ^j(\theta)$ in \eqref{eq.W-power-s} change into $W
^j(\theta) b ^j$. So, different choices of $b $ may lead the Taylor
coefficients to be very large or very small, which makes the
computations with them very susceptible to round off error. It is
numerically advantageous to choose the scale in such a way that the
Taylor coefficients have a comparable size.

In practice, we run the calculations twice. A preliminary one whose
only purpose is to compute an approximation of the scale that makes
the coefficients to remain more or less of the same size. Then, a more
definitive calculation can be run. That last running is more
numerically reliable.

\begin{remark}\label{rem.newton-subtle}
In standard implementation of the Newton Method for fixed points of a
functional, say $\Psi$, the fact that the space of solutions is two
dimensional leads $D\Psi - \text{Id}$ to have a two dimensional
kernel, and not being invertible.

In our case, we will develop a very explicit and fast algorithm that
produces an approximate linear right inverse. This linear right
inverse leads to convergence to an element of the family
\eqref{eq.shift}.
\end{remark}

\section{Computation of \texorpdfstring{$(K,\omega _0, \lambda _0)$ --}{(K,omg0, lam0) -} unperturbed case}
\label{sec.unperturbed-case}

For completeness, we quote the Algorithm~4.4 in~\cite{Huguet2013}
adding some practical comments. That algorithm allows us to
numerically compute $\omega _0$, $\lambda _0$ and $K\colon
\tilde{\mathbb{T}}\times [-1,1] \rightarrow \mathbb{R}^2$ in
\eqref{eq.inv}. We note that the algorithm has quadratic convergence
as it was proved in \cite{Huguet2013}.

\begin{alg}\label{alg.huguet} Quasi-Newton method \
\begin{enumerate}
\setlength{\itemsep}{.9em}
\renewcommand*{\theenumi}{\emph{\arabic{enumi}}}
\renewcommand*{\labelenumi}{\theenumi.}
\item [$\star$] \texttt{Input:} $\dot x=X(x)$ in $\mathbb{R}^2$,
  $K(\theta,s) = K ^0(\theta) + K ^1(\theta)b _0s$, $\omega _0> 0$,
  $\lambda _0 \in \mathbb{R}$ and scaling factor $b _0>0$.
\item [$\star$] \texttt{Output:} $K(\theta,s)=\sum_{j=0}^{m-1} K
  ^j(\theta) (b _0s)^j$, $\omega _0$ and $\lambda _0$ such that
  $\lVert E \rVert \ll 1$.
\item\label{alg.huguet-loop} $E \gets X\circ K - (\omega _0 \partial
  _\theta + \lambda _0 s \partial _s)K$.
\item\label{alg.huguet-linear-system} Solve $DK \tilde E = E$ and
  denote $\tilde E \equiv (\tilde E _1, \tilde E _2)$.
\item $\sigma \gets \int _0^1 \tilde E _1(\theta, 0)\, d\theta$ and
  $\eta \gets \int _0^1 \partial _s \tilde E _2(\theta, 0)\, d\theta$.
\item $E _1 \gets \tilde E _1 - \sigma $ and $E _2 \gets \tilde E _2 -
  \eta s$.
\item\label{alg.huguet-cohom1} Solve $(\omega _0 \partial
  _\theta + \lambda _0 s \partial _s)S _1=E _1$ imposing
  \begin{equation}\label{eq.huguet-cohom1-cond}
    \int _0^1 S
    _1(\theta, 0)\, d\theta = 0.
    \end{equation}
\item\label{alg.huguet-cohom2} Solve $(\omega _0 \partial
  _\theta + \lambda _0 s \partial _s)S _2 - \lambda _0 S _2=E _2$
  imposing
\begin{equation}  \label{eq.huguet-cohom2-cond}
\int _0^1 \partial _s S _2(\theta, 0)\, d\theta = 0.
\end{equation}
\item $S \equiv (S _1, S _2)$.
\item Update: $K\gets K+ DK S$, $\omega _0 \gets \omega _0 + \sigma $
  and $\lambda _0 \gets \lambda _0 + \eta$.
\item Iterate~\eqref{alg.huguet-loop} until convergence in $K$,
  $\omega$ and $\lambda$. Then undo the scaling $b _0$.
\end{enumerate}
\end{alg}
Algorithm~\ref{alg.huguet} requires some practical considerations:
\begin{enumerate}
\renewcommand*{\theenumi}{\emph{\roman{enumi}}}
\renewcommand*{\labelenumi}{\theenumi.}
\item It is clear that the most delicate steps of above algorithm are
  \ref{alg.huguet-cohom1} and \ref{alg.huguet-cohom2}, which are often
  called \emph{cohomology equations}. These steps involve solving
  PDE's whereas the others are much simpler.  Indeed, the
  discretizations used are dictated by the desire of solving these
  equations in an efficient way.

\item \emph{Initial guess.} $K ^0\colon \tilde{\mathbb{T}}\rightarrow
  \mathbb{R}^2$ will be a parameterization of the periodic orbit of
  the ODE with frequency $\omega _0$. It can be obtained, for
  instance, by a Poincar\'e section method, continuation of integrable
  systems or Lindstedt series.  An approximation for $K ^1\colon
  \tilde{\mathbb{T}}\rightarrow \mathbb{R}^2$ and $\lambda$ can be
  obtained by solving the variational equation
  \begin{align*}
    DX\circ K ^0(\theta) U(\theta) &= \omega _0 \frac{d}{d\theta}
    U(\theta), \\ U(0) &= Id _2.
  \end{align*}
  Hence if $(e^{\lambda _0 / \omega _0}, K ^1(0))$ is the eigenpair of
  $U(1)$ such that $\lambda _0 < 0$, then $K ^1(\theta) = U(\theta) K
  ^1(0) e ^{-\lambda _0 \theta / \omega _0}$.
\item \emph{Stopping criteria.} As any Newton method, a possible
  condition to stop the iteration can be when either $\lVert E \rVert$
  or $\max\{\lVert DK S \rVert, \lvert \sigma \rvert, \lvert
  \eta\rvert \}$ is smaller than a given tolerance.

  Note that the \emph{a posteriori} theorems in \cite{Huguet2013} give
  a criterion of smallness on the error depending on properties of the
  function $K$. If these criteria are satisfied, one can ensure that
  there is a true solution close to the numerical one.

\item \emph{Uniqueness.}  Note that in the steps
  \ref{alg.huguet-cohom1} and \ref{alg.huguet-cohom2}, which involve
  solving the cohomology equations, the solutions are determined only
  up to adding constants in the zero or first order terms. We have
  adopted the conventions \eqref{eq.huguet-cohom1-cond},
  \eqref{eq.huguet-cohom2-cond}. These conventions make the solution
  operator linear (which matches well the standard theory of
  Nash-Moser methods since it is easy to estimate the norm of the
  solutions).

As it is shown in \cite{Huguet2013}, the algorithm converges
quadratically fast to a solution, but since the problem is
underdetermined, we have to be careful when comparing solutions of
different discretization. In \cite{Huguet2013} there is discussion of
the uniqueness, but for our purposes in this paper, any of the
solutions will work.  The uniqueness of the solutions considered in
this paper is discussed in section~\S\ref{sec.overview-nc}.

\item \emph{Convergence.} It has been proved in~\cite{Huguet2013} that
  even of the quasi-Newton method, it still has quadratic convergence.

Note that it is remarkable that we can implement a Newton like method
without having to store -- much less invert -- any large matrix. Note
also that we can get a Newton method even if the derivative of the
operator in the fixed point equation has eigenvalues $1$. See
remark~\ref{rem.newton-subtle}.
\end{enumerate}

\newcommand{\pfun}{S}
\subsection{Fourier discretization of periodic functions} 
As it was mentioned before, the key step of Algorithm~\ref{alg.huguet}
is to solve the equations in steps \ref{alg.huguet-cohom1} and
\ref{alg.huguet-cohom2}. Their numerical resolution will be
particularly efficient when the functions are discretized in
Fourier-Taylor series. This will be the only discretization we will
consider in this paper providing a deep discussion.

\begin{remark} 
Even if we will not use it in this paper, we remark that \cite[\S
  4.3.1]{Huguet2013} there are two methods to solve them. One assumes
a Fourier representation in terms of the angle $\theta$ and the other
uses integral expressions which can be evaluated efficiently in
several discretizations of the functions (e.g. splines). The spline
representation could be preferable to the Fourier-Taylor in some
regimes where the limit cycles are bursting.
\end{remark} 

Recall that a function $\pfun \colon \mathbb{R} \rightarrow
\mathbb{R}$ is called periodic when $\pfun (\theta+1)=\pfun (\theta)$
for all $\theta$.

To get a computer representation of a periodic function, we can either
take a mesh in $\theta$, i.e. $(\theta _k) _{k=0}^{n_\theta - 1}$ and
store the values of $\pfun $ at these points: $\widecheck \pfun =
(\widecheck \pfun _k) _{k=0}^{n_\theta - 1} \in \mathbb{R}^{n
  _\theta}$ with $\widecheck \pfun _k = \pfun(\theta _k)$ or we can
take advantage of the periodicity and represent it in a trigonometric
basis.

The Discrete Fourier Transform (DFT), and also its inverse, allows to
switch between the two representations above.  If we fix a mesh of
points of size $n _\theta$ uniformly distributed in $[0,1)$,
  i.e. $\theta _k = k / n_\theta$, the DFT is:
\[
 \widehat \pfun = (\widehat \pfun _k)_{k=0}^{n _\theta - 1} \in
 \mathbb{C}^{n _\theta}
\]
so that 
\begin{equation}\label{eq.dft-top}
 \widecheck \pfun _k = \sum _{j = 0}^{n _\theta - 1} \widehat \pfun _j
 e^{2\pi \I j k / n _\theta}
\end{equation}
or equivalently
\begin{equation}\label{eq.dft-toc}
 \widehat \pfun _k = \frac{1}{n _\theta} \sum _{j = 0}^{n _\theta - 1}
 \widecheck \pfun _j e^{-2\pi \I j k / n _\theta}.
\end{equation}

In the case of a real valued function, $\widehat \pfun _0$ is real and
the complex numbers $\widehat \pfun $ satisfy Hermitian symmetry,
i.e. $\widehat \pfun _k = \widehat \pfun _{n_\theta -k}^\ast$
(denoting by ${}^\ast$ the complex conjugate), which implies $\widehat
\pfun _{n _\theta / 2}$ real when $n _\theta$ is even.  Then, we
define real numbers $(a_0; a _k, b _k)_{k=1}^{\lceil n _\theta/2\rceil
  -1}$ if $n _\theta$ is odd, here $\lceil \cdot \rceil$ denotes the
ceil function, otherwise $(a_0, a_{n _\theta/2}; a _k, b _k)_{k=1}^{n
  _\theta/2-1}$ defined by
\[
 a _0 = 2 \widehat \pfun _0\text{, }a _{n _\theta/2} = 2 \widehat
 \pfun _{n _\theta/2}\text{, }a _k = 2 \Re \widehat \pfun _k \text{
   and } b _k = -2\Im \widehat \pfun _{k}
\]
with $1 \leq k < \lceil n _\theta/2 \rceil$. 

Thus, $\pfun $ can be approximated by
\begin{equation}\label{eq.dft-top-real}
 \pfun (\theta) = \frac{a _0}{2} + \frac{a _{n _\theta /2}}{2} \cos
 (\pi n _\theta \theta) + \sum _{k = 1}^{\lceil n _\theta /2 \rceil
   -1} a _k \cos (2 \pi k \theta) + b _k \sin(2\pi k \theta)
\end{equation}
where the coefficient $a _{n _\theta/2}$ only appears when $n _\theta$
is even and it refers to the aliasing notion in signal theory.

Therefore \eqref{eq.dft-top-real} is equivalent to \eqref{eq.dft-top}
but rather than $2n _\theta$ real numbers, only half of them are
needed.

Henceforth, all real periodic functions $\pfun$ can be represented in
a computer by an array of length $n _\theta$ whose values are either
the values of $\pfun$ on a grid or the Fourier coefficients.  These
two representations are, for all practical purposes equivalent since
there is a well known algorithm, Fast Fourier Transform (FFT), which
allows to go from one to the other in $\Theta(n _\theta \log n
_\theta)$ operations.  The FFT has very efficient implementations so
that the theoretical estimates on time are realistic (we can use
\textsc{fftw3} \cite{FFTW2005}, which optimizes the use of the
hardware).

We can also think of functions of two variables $W(\theta,s)$ where
one variable $\theta$ is periodic and the other variable $s$ is a real
variable.  In the numerical implementations, the variable $s$ will be
discretized as a polynomial. Thus $W(\theta, s)$ can be thought as a
function of $\theta$ taking values in polynomials of length
$n_s$. Hence, a function of two variables with periodicity as above
will be discretized by an array $n_\theta \times n_s$.  The meaning
could be that it is a polynomial for each value of $\theta$ in a mesh
or that it a polynomial of whose coefficients are Fourier
coefficients.  Alternatively, we could think of $W(\theta, s)$ as a
polynomial in $s$ taking values in a space of periodic functions.

This mixed representation of Fourier series in one variable and power
series in another variable, is often called Fourier-Taylor series and
has been used in celestial mechanics for a long time, dates back to
\cite{Broucke} or earlier.  We note that, modern computer languages
allow to overload the arithmetic operations among different types in a
simple way.

It is important to note that all the operations in
Algorithm~\ref{alg.huguet} are fast either on the Fourier
representation or in the values of a mesh representation.  For
example, the product of two functions or the composition on the left
with a known function are fast in the representation by values in a
mesh. More importantly for us, as we will see, the solution of
cohomology equations is fast in the Fourier representation.  On the
other hand, there are other steps of Algorithm~\ref{alg.huguet}, such
as adding, are fast in both representations.

Similar consideration of the efficiency of the steps will apply to the
algorithms needed to solve our problem. The main novelty of the
algorithms in this paper compared with those of \cite{Huguet2013} is
that we will need to compose some of the unknown functions (in
\cite{Huguet2013} the unknowns are only composed on the left with a
known function). The algorithms we use to deal with composition will
be presented in section~\S\ref{sec.composing-periodic-maps}.  The
composition operator will be the most delicate numerical aspect, which
was to be expected, since it was also the most delicate step in the
analysis in \cite{Jiaqi2020}. The composition operator is analytically
subtle. A study which gives examples that results are sharp is in
\cite{Obaya1999}. See also \cite{AppellZ90}.

\begin{remark} 
Fourier series are extremely efficient for smooth functions which do
not have very pronounced spikes. For rather smooth functions -- a
situation that appears often in practice -- it seems that Fourier
Taylor series is better than other methods.

It should be noted, however that in several models of interest in
electronics and neuroscience, the solutions move slowly during a large
fraction of the period, but there is a fast movement for a short time
(bursting).  In these situations, the Fourier scheme has the
disadvantage that the coefficients decrease slowly and that the
discretization method does not allow to put more effort in describing
the solutions during the times that they are indeed changing
fast. Hence, the Fourier methods become unpractical when the limit
cycles are bursting.  In such cases, one can use other methods of
discretization. In this paper, we will not discuss alternative
numerical methods, but note that the theoretical estimates of
\cite{Jiaqi2020} remain valid independent of the method of
discretization. We hope to come back to implementing the toolkit of
operations of this paper in other discretizations.
\end{remark}

\begin{remark} 
One of the confusing practical aspects of the actual implementation is
that the coefficients of the Fourier arrays are often stored in a
complicated order to optimize the operations and the access during the
FFT.

For example concerning the coefficients $a _k$'s and $b _k$'s in
\eqref{eq.dft-top-real}, in \textsc{fftw3}, the
\texttt{fftw\_plan\_r2r\_1d} uses the following order of the Fourier
coefficients in a real array $(v _0, \dotsc, v _{n _\theta-1})$.
\begin{align*}
 v _0 &= a _0, \\ v _k &= 2a _k \text{ and } v _{n _\theta - k} =-2b
 _k \quad \text{ for } 1 \leq k < \lceil n _\theta/2 \rceil, \\ v
 _{n _\theta/2} &= a _{n _\theta/2}
\end{align*}
where the index $n _\theta/2$ is taken into consideration if and only
if $n _\theta$ is even. Another standard order in other packages is
just $(a _0, a _{n _\theta/2}; a _k, b _k)$ in sequential order or $(a
_0; a _k, b _k)$ if $n _\theta$ is odd.
\end{remark}

To measure errors and size of functions represented by Fourier series,
we have found useful to deal with weighted norms involving the Fourier
coefficients.
\begin{align*}
\lVert \pfun \rVert _{w\ell^1, n} &= 2(n _\theta/2)^n | \widehat S _{n
  _\theta /2} | + \sum _{k=1}^{\lceil n _\theta/2 \rceil -1} ((n
_\theta-k)^{n} + k ^{n} ) \lvert \widehat \pfun _k \rvert \\ &= (n
_\theta /2)^{n} \lvert a _{n _\theta /2} \rvert + \frac{1}{2} \sum
_{k=1}^{\lceil n _\theta/2 \rceil -1} ((n _\theta-k)^{n} + k ^{n} ) (a
_k^2 + b _k^2)^{1/2} .
\end{align*}
where, again, the term for $n _\theta /2$ only appears if $n _\theta$
is even.

The smoothness of $\pfun$ can be measured by the speed of decay of the
Fourier coefficients and indeed, the above norms give useful
regularity classes that have been studied by harmonic analysts.

% \marginpar{This remark is a bit marginal to the goals of the paper.}
\begin{remark} 
The relation of the above regularity classes with the the most common
$C^m$ is not straightforward, as it is well known by Harmonic
analysts, \cite{Stein70}.

Riemann-Lebesgue's Lemma tells us that if $\pfun$ is continuous and
periodic, $\widehat \pfun _k \to 0$ as $k \to \infty$ and in general
if $\pfun$ is $m$ times differentiable, then $\lvert \widehat \pfun _k
\rvert |k|^m $ tends to zero. In particular, $\lvert \widehat \pfun _k
\rvert \leq C / \lvert k \rvert ^m$ for some constant $C>0$.

In the other direction, from $\lvert \widehat \pfun _k \rvert \leq C /
\lvert k \rvert ^m$ we cannot deduce that $\pfun \in C^m$.

One has to use more complicated methods.  In \cite{Petrov2002} it was
found that one could find a practical method based on Littlewood-Paley
theorem (see \cite{Stein70}) which states that the function $\pfun$ is
in $\alpha$-H\"older space with $\alpha \in \mathbb{R}_+$ if and only
if, for each $\eta \geq 0$ there is constant $C >0$ such that for all
$t >0$.
\[
\biggl\lVert \biggl(\frac{\partial}{\partial t} \biggr)^\eta e ^{-t
  \sqrt{-\Delta}\theta} \biggr\rVert _{L^\infty (\mathbb{T})} \leq C t
^{\alpha - \eta}.
\]
The above formula is easy to implement if one has the Fourier
coefficients, as it is the case in our algorithms.
\end{remark}

\goodbreak

\subsection{Solutions of the cohomology equations in Fourier 
representation} 

Under the Fourier representation we can solve the cohomological
equations in the steps \ref{alg.huguet-cohom1} and 
\ref{alg.huguet-cohom2} of the Algorithm~\ref{alg.huguet}.

\begin{pro}[Fourier version, \cite{Huguet2013}]
  \label{pro.Fourier-solution} 
Let $E(\theta, s)=\sum _{j,k} E _{jk} e^{2\pi \I k \theta} s^j$.
\begin{itemize}
\item If $E_{00}=0$, then $(\omega \partial _\theta + \lambda s
  \partial _s) u(\theta, s) = E(\theta, s) $ has solution $u(\theta,
  s)= \sum _{j,k} u _{jk} e^{2\pi \I k \theta}s^j$ and
  \[
   u _{jk} =
   \begin{cases}
    \frac{E _{jk}}{\lambda j + 2\pi \I \omega k} & \text{if } (j,k)\ne
    (0,0) \\ \alpha & \text{otherwise}.
   \end{cases}
  \]
  for all real $\alpha$. Imposing $\int _0^1 u(\theta,0)\, d\theta =
  0$, then $\alpha = 0$.
\item If $E _{10}=0$, then $(\omega \partial _\theta + \lambda s
  \partial _s - \lambda) u(\theta, s) = E(\theta, s) $ has solution
  $u(\theta, s)= \sum _{j,k} u _{jk} e^{2\pi \I k \theta}s^j$ and
  \[
   u _{jk} = 
   \begin{cases}
    \frac{E _{jk}}{\lambda (j-1) + 2\pi \I \omega k} & \text{if }
    (j,k)\ne (1,0) \\ \alpha & \text{otherwise}.
   \end{cases}
  \]
  for all real $\alpha$. Imposing $\int _0^1 \partial _s u(\theta,0)\,
  d\theta = 0$, then $\alpha = 0$.
\end{itemize}
\end{pro}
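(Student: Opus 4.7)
The approach is direct coefficient matching in the Fourier--Taylor basis, since both differential operators act diagonally on the monomials $e^{2\pi\I k\theta} s^j$. A short computation yields
\begin{equation*}
(\omega\partial_\theta + \lambda s\partial_s)\bigl(e^{2\pi\I k\theta} s^j\bigr) = (\lambda j + 2\pi\I\omega k)\, e^{2\pi\I k\theta} s^j,
\end{equation*}
and the shifted operator $\omega\partial_\theta + \lambda s\partial_s - \lambda$ produces the same eigenvalue with $j$ replaced by $j-1$. Substituting the Fourier--Taylor expansions $u = \sum u_{jk}\, e^{2\pi\I k\theta} s^j$ and $E = \sum E_{jk}\, e^{2\pi\I k\theta} s^j$ into each cohomology equation and matching coefficients turns the PDE into the scalar relations $(\lambda j + 2\pi\I\omega k)\, u_{jk} = E_{jk}$ (respectively $(\lambda(j-1) + 2\pi\I\omega k)\, u_{jk} = E_{jk}$).

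Next I would identify the resonant indices. Because $\omega > 0$ and $\lambda < 0$ are both nonzero, the factor $\lambda j + 2\pi\I\omega k$ vanishes only at $(j,k)=(0,0)$: its real part $\lambda j$ forces $j=0$, and then its imaginary part $2\pi\omega k$ forces $k=0$. Analogously $\lambda(j-1) + 2\pi\I\omega k$ vanishes only at $(j,k)=(1,0)$. At every nonresonant index I would divide to obtain the formulas stated in the proposition. At the resonant index the scalar equation degenerates to $0 = E_{jk}$, which is compatible precisely because of the hypothesis $E_{00}=0$ (respectively $E_{10}=0$), and leaves $u_{00}$ (respectively $u_{10}$) undetermined; I would denote this free value by $\alpha \in \mathbb{R}$.

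Finally, I would use the prescribed integral conditions to pin down $\alpha$. For the first case, $u(\theta, 0) = \sum_k u_{0k}\, e^{2\pi\I k\theta}$, so $\int_0^1 u(\theta, 0)\, d\theta$ extracts only the $k=0$ Fourier mode, namely $u_{00}=\alpha$; requiring the integral to vanish gives $\alpha=0$. For the second case, $\partial_s u(\theta, 0) = \sum_k u_{1k}\, e^{2\pi\I k\theta}$, and the analogous integration yields $u_{10}=\alpha=0$. There is no real obstacle in the argument: the entire content of the proposition is the algebraic identification of the resonant (small-divisor) indices together with the observation that the given normalizations correctly fix the one-parameter ambiguity left by the cohomology operator. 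Convergence of the resulting series in the weighted Fourier classes used later follows from the fact that the divisors $\lambda j + 2\pi\I\omega k$ grow linearly in $(j,k)$ away from the resonance, but this tame estimate is not part of the statement to be proved here.
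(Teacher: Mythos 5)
Your argument is correct and is the standard one: diagonalize $(\omega\partial_\theta + \lambda s\partial_s)$ on the Fourier--Taylor monomials, identify the single resonant index from the sign conditions on $\omega$ and $\lambda$, and observe that the stated integral normalization isolates exactly the coefficient of the resonant mode. The paper does not reproduce a proof here (it cites \cite{Huguet2013}), but what you wrote is essentially the proof one finds there, so there is nothing substantive to compare; your remark that convergence in the weighted classes is tame but not part of the claim is also the right scoping.
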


The paper \cite{Huguet2013} also presents a solution in terms of
integrals. Those integral formulas for the solution are independent of
the discretization and work for discretizations such as Fourier
series, splines and collocations methods. Indeed, the integral formulas
are very efficient for discretizations in splines or in collocation
methods.  In this paper we will not use them since we will discretize
functions in Fourier series and for this discretization, the methods
described in Proposition~\ref{pro.Fourier-solution} are more efficient.

%\marginpar{I commented out the integral form solution, since we do
%not use it}

\begin{comment} 
\begin{pro}[Integral version, \cite{Huguet2013}]
Let $E(\theta, s)=\sum _{j} E _{j} (\theta) s^j$ and $\lambda \omega <
0$.
\begin{itemize}
\item If $\int _0^1 E _0(\theta) \, d\theta=0$, then $(\omega \partial
  _\theta + \lambda s \partial _s) u(\theta, s) = E(\theta, s) $ has
  solution
\[
u(\theta,s) = \alpha + \frac{1}{\omega}\int _0^\theta E_0(\sigma)\,
d\sigma - \frac{1}{\omega} \sum _{j\geq 1} \frac{s^j}{1-e^{j\lambda /
    \omega}} \int _0^1 E_j(\theta+z) e^{z j \lambda / \omega}\, dz
\]
for all real $\alpha$. Imposing $\int _0^1 u(\theta, 0)\, d\theta =
0$, then $\alpha = \frac{1}{\omega} \int _0^1 \theta E_0(\theta)\,
d\theta$.
\item If $\int _0^1 E _1(\theta) \, d\theta=0$, then $(\omega \partial
  _\theta + \lambda s \partial _s - \lambda) u(\theta, s) = E(\theta,
  s) $ has solution
\begin{align*}
u(\theta,s) = &\frac{1}{\omega (1-e^{\lambda / \omega})} \int _0^1 E
_0(\theta - z)e^{z\lambda / \omega}\, dz + \biggl[ \alpha +
  \frac{1}{\omega} \int _0^\theta E _1(\sigma)\, d\sigma \biggr] s
\\ &- \frac{1}{\omega} \sum _{j \geq 2} \frac{s^j}{1-e^{(j-1)\lambda /
    \omega}} \int _0^1 E_j(\theta+z) e^{z (j-1) \lambda / \omega}\, dz
\end{align*}
for all real $\alpha$. Imposing $\int _0^1 \partial _s u(\theta, 0)\,
d\theta = 0$, then $\alpha = \frac{1}{\omega} \int _0^1 \theta E
_1(\theta)\, d\theta$.
\end{itemize} 
\end{pro}
\end{comment} 

\subsection{Treatment of the step \ref{alg.huguet-linear-system} in  Algorithm~\ref{alg.huguet}}

To solve the linear system in the step \ref{alg.huguet-linear-system}
of Algorithm~\ref{alg.huguet}, we can use
Lemma~\ref{lem.poly-linear-system}, whose proof is a direct power
matching.

\begin{lem}\label{lem.poly-linear-system}
Consider the equation for $x$ given by Let $A(\theta,s) x(\theta, s) =
b(\theta, s)$ where $A, b$ are given. More explicitly:
\[
\biggl(\sum_{k\geq 0} A_k(\theta) s^k \biggr) \sum_{k\geq 0} \ve
x_k(\theta)s^k = \sum_{k\geq 0} \ve b_k(\theta) s^k.
\]
Then, the coefficients $\ve x_k(\theta)$ are obtained recursively by
solving
\[
A_0(\theta) \ve x_k(\theta) = \ve b_k(\theta) - \sum_{j=1}^{k}
A_j(\theta) \ve x_{k-j}(\theta).
\]
which can be done provided that $A_0(\theta)$ is invertible and that
one knows how to multiply and add periodic functions of $\theta$.
\end{lem}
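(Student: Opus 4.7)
The plan is to use the standard Cauchy-product identity for formal power series in $s$ with coefficients depending on $\theta$, and then match powers of $s$ on both sides. I will treat $\theta$ purely as a parameter, since every operation in the statement (multiplication of $A_j(\theta)$ by a vector, addition, inversion of $A_0(\theta)$) is performed pointwise in $\theta$.

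First I would expand the product on the left-hand side as a Cauchy product,
\[
\Bigl(\sum_{k\ge 0} A_k(\theta)s^k\Bigr)\Bigl(\sum_{k\ge 0}\ve x_k(\theta)s^k\Bigr)
= \sum_{k\ge 0}\Bigl(\sum_{j=0}^{k} A_j(\theta)\ve x_{k-j}(\theta)\Bigr) s^k,
\]
which is valid at the formal level and, in the analytic setting, inside the common radius of convergence. Matching the coefficient of $s^k$ against $\ve b_k(\theta)$ yields, for every $k\ge 0$ and every $\theta$,
\[
\sum_{j=0}^{k} A_j(\theta)\,\ve x_{k-j}(\theta) = \ve b_k(\theta),
\]
and isolating the $j=0$ summand gives exactly the recursion in the statement.

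Next I would argue by induction on $k$ that $\ve x_k(\theta)$ is determined. The base case $k=0$ reduces to $A_0(\theta)\ve x_0(\theta)=\ve b_0(\theta)$, which has a unique solution since $A_0(\theta)$ is invertible for every $\theta$. For the inductive step, assuming $\ve x_0,\dotsc,\ve x_{k-1}$ are already known periodic functions of $\theta$, the right-hand side of the recursion is assembled using only the operations stated as available, namely multiplication and addition of known periodic functions; then one applies $A_0(\theta)^{-1}$ pointwise to recover $\ve x_k(\theta)$.

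There is no real obstacle here; the only point that deserves a brief comment is that invertibility of $A_0(\theta)$ for each $\theta$ guarantees that $A_0^{-1}$ is a well-defined map on periodic functions, and that periodicity is preserved at every step of the recursion (since the operations involved respect periodicity in $\theta$). I would close by remarking that the same argument works at the formal power series level in $s$, so the lemma gives both an existence/uniqueness statement and an explicit algorithm, exactly as needed for its use in Algorithm~\ref{alg.huguet}.
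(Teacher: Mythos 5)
Your proof is correct and matches the paper's approach: the paper states only that the lemma follows by ``direct power matching,'' which is exactly the Cauchy-product expansion and coefficient comparison you carry out, followed by the routine induction on $k$. Nothing is missing.
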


We also recall that composition of a polynomial in the left with a
exponential, trigonometric functions, powers, logarithms (or any
function that satisfies an easy differential equation) can be done
very efficiently using algorithms that are reviewed in
\cite{Haro2016} which goes back to \cite{Knuth1981}.

We present here the case of the exponential which will be used later
on in Algorithm~\ref{alg.sn}.

If $P$ is a given polynomial -- or a power series -- with coefficients
$P _j$, we see that $E(s) = \exp P(s)$ satisfies
\[
\frac{d}{ds} E(s) = E(s) \frac{d}{ds} P(s)
\]
Equating like powers on both sides, it leads to $E_0 = \exp P(0)$,
and the recursion:
\[
E_{j} = \frac{1}{j} \sum _{k=0}^{j-1} (j-k) P _{j-k} E _{k}
\text{, } \qquad j \geq 1,
\]
Note that this can also be done if the coefficients of $P$ are
periodic functions of $\theta$ (or polynomials in other variables).
In modern languages supporting overloading or arithmetic functions,
all this can be done in an automatic manner.

Note that if the polynomial has degree $n_s$, the computation up to
degree $n_s$ takes $\Theta( n_s^2)$ operations of multiplications of
the coefficients.

\section{Computation of \texorpdfstring{$(W,\omega, \lambda)$ --}{(W,omg,lamb) -} perturbed case}
\label{sec.perturbed-case}
The main result in the paper \cite{Jiaqi2020} %in \emph{a posteriori}
states that if $\varepsilon$ in \eqref{eq.inv} is small enough, a
periodicity condition like \eqref{eq.nc1} and a normalization like
\eqref{eq.nc2} are considered, then there exists a unique tuple $(W,
\omega, \lambda)$ verifying \eqref{eq.inv}, \eqref{eq.nc1} and
\eqref{eq.nc2}.

The formulation of that result in \cite{Jiaqi2020} is done in \emph{a
  posteriori} format which ensures the existence of a true solution
once an approximate enough solution is provided as initial guess for
the iterative scheme.

Moreover, it also gives the Lipschitz dependence of the solution on
parameters which allows to consider a continuation approach.

We refer to \cite{Jiaqi2020} for a precise formulation of the result
involving choices of norms to measure the error in the approximate
solutions.

\subsection{Fixed point approach}
We compute all the coefficients $W^j(\theta)$ of the truncated
expression $W(\theta,s)$ in \eqref{eq.W-power-s} order by order.  The
zero and first orders require a special attention due to the fact that
the values $\omega$ and $\lambda$ are obtained in the equation
\eqref{eq.inv} matching coefficients of $s^0$ and $s^1$ respectively.
The condition that allows to obtain $\omega$ comes from the
periodicity condition~\eqref{eq.pc-W}.  The mapping $W^0$ is not a
periodic function. But we can use it to get a periodic one defined by
$\hat W^0(\theta) \coloneq W^0(\theta) - \left(
\begin{smallmatrix}
\theta \\ 0
\end{smallmatrix} \right)$.
The condition for $\lambda$ is given by the normalization
condition~\eqref{eq.nc2}. As in the unperturbed case, we are allowed
to use a scaling factor. The use of such a scaling factor allows to
set the value of $\rho$ in \eqref{eq.nc2} equal to $1$.

Algorithm~\ref{alg.s0} sketches the fixed-point procedure to get
$\omega$ and $W^0$ whose periodicity condition is ensured in step
\eqref{alg.s0-pc}. In this case the initial condition will be $\omega
_0$ (the value for $\varepsilon = 0$) for $\omega$ and
$\left(\begin{smallmatrix} \theta \\ 0\end{smallmatrix} \right)$ for
  $W^0(\theta)$ since $W(\theta,s)$ is close to the identity.

% \cite{Dahlquist2008}
\begin{alg}[$s^0$ case] \label{alg.s0} \ \\
Let $\widetilde{W^0}(\theta)\coloneq W^0\bigl(\theta-\omega r\circ
K(W^0(\theta))\bigr)$.
\begin{enumerate}
\setlength{\itemsep}{.9em}
\renewcommand*{\theenumi}{\emph{\arabic{enumi}}}
\renewcommand*{\labelenumi}{\theenumi.}
\item [$\star$] \texttt{Input:} $\dot x = X(x) + \varepsilon
  P(x,\tilde x, \varepsilon)$, $0<\varepsilon \ll 1$, $K(\theta,s) =
  \sum_{j=0}^{m-1} K ^j(\theta) (b _0s)^j$, $b _0 > 0$, $\omega _0>0$
  and $\lambda _0 < 0$.
\item [$\star$] \texttt{Output:} $\hat W^0\colon \mathbb{T}\rightarrow
  \mathbb{R}^2$ and $\omega > 0$.
\item $\hat W^0(\theta) \gets 0$ and $\omega \gets \omega _0$.
\item \label{alg.s0-loop} $W^0(\theta) \gets
\begin{pmatrix}
\theta \\ 0
\end{pmatrix} + \hat W^0(\theta)$.
\item Solve $DK\circ W^0(\theta) \eta(\theta) = \varepsilon P(K\circ
  W^0(\theta), K\circ \widetilde{W^0}(\theta), \varepsilon)$. Let
  $\eta \equiv (\eta_1, \eta_2)$.
\item $\alpha \gets \int _0^1 \eta_1(\theta)\, d\theta$ and $\omega
  \gets \omega _0 + \alpha$.
\item \label{alg.s0-pc} Solve $\omega \partial _\theta \hat W
  _1^0(\theta) = \eta_1(\theta) - \alpha$ imposing $\int _0^1 \hat W
  _1^0(\theta) \, d\theta=0$.
\item Solve $(\omega \partial _\theta - \lambda _0) \hat W
  _2^0(\theta) = \eta_2(\theta)$.
\item Iterate~\eqref{alg.s0-loop} until convergence in $W^0$ and
  $\omega$.
\end{enumerate}
\end{alg}
Algorithm~\ref{alg.sn} sketches the steps to compute $(W^1,\lambda)$
and $W^n$ for $n \geq 2$. The initial guesses are $\lambda _0$ for
$\lambda$, $ \left(
\begin{smallmatrix}
0 \\ 1
\end{smallmatrix} \right)$ for $W^1$ and $
\left(
\begin{smallmatrix}
0 \\ 0
\end{smallmatrix} \right)$
for $W^n$. In either case, it is required to solve a linear system of
the form of Lemma~\ref{lem.poly-linear-system} as well as
cohomological equation similar to the unperturbed case.
\begin{alg}[$s^1$ case and $s^n$ case with $n\geq 2$] \label{alg.sn} \ \\  
Let $\widetilde W(\theta,s) \coloneq W\bigl(\theta - \omega r \circ
K(W(\theta,s)), s e^{-\lambda r \circ K(W(\theta,s))}\bigr)$.
\begin{enumerate}
\setlength{\itemsep}{.9em}
\renewcommand*{\theenumi}{\emph{\arabic{enumi}}}
\renewcommand*{\labelenumi}{\theenumi.}
\item [$\star$] \texttt{Input:} $\dot x=X(x) + \varepsilon
  P(x,\widetilde x, \varepsilon)$, $0 < \varepsilon \ll 1$, $K(\theta,
  s) = \sum_{j=0}^{m-1} K ^j(\theta)(b _0s)^j$, $b _0>0$, $\omega _0 >
  0$, $\lambda _0 < 0$, $\hat W ^0(\theta)$, $W ^j(\theta)$ for $0 < j
  < n$, $b > 0$ and $\omega > 0$.
\item [$\star$] \texttt{Output:} either $W^1\colon
  \mathbb{T}\rightarrow \mathbb{T} \times \mathbb{R}$ and $\lambda<0$
  or $W^n\colon \mathbb{T}\rightarrow \mathbb{T} \times \mathbb{R}$.
\item $W^n(\theta) \gets \begin{pmatrix} 0 \\ 0 \end{pmatrix}$.
\item [$s^1$] If $n=1$, $W^1(\theta) \gets \begin{pmatrix} 0
  \\ 1 \end{pmatrix} $ and $\lambda \gets \lambda _0$.
\item\label{alg.sn-loop} $W(\theta,s) \gets
\begin{pmatrix}
\theta \\ 0
\end{pmatrix} + \hat W^0(\theta) + \sum \limits _{j=1}^{n} W^j(\theta) (bs)^j$.
\item\label{alg.sn-linear-system} $Y(W(\theta,s)) \gets DK\circ
  W(\theta,s)^{-1} P(K\circ W(\theta,s), K\circ\widetilde{
    W}(\theta,s), \varepsilon)$.
\item\label{eq.sn-traje} $\eta(\theta)\gets
  \varepsilon\frac{\partial^n Y}{\partial s^n} (W(\theta,s)) _{|s =
    0}$. Let $\eta \equiv (\eta_1, \eta_2)$.
\item [$s^1$] If $n=1$, then $\lambda \gets \lambda _0 + \int _0^1
  \eta_2(\theta)\, d\theta$.
\item Solve $(\omega \partial _\theta + n\lambda) W _1^n(\theta) =
  \eta_1(\theta)$.
\item Solve $(\omega \partial _\theta + n\lambda - \lambda _0)W
  _2^n(\theta) = \eta_2(\theta)$.
\item Iterate~\eqref{alg.sn-loop} until convergence. Then undo the
  scaling $b$.
\end{enumerate}
\end{alg}
Both algorithms~\ref{alg.s0} and~\ref{alg.sn} have non-trivial parts,
such as, the effective computation of $\widetilde W$, the numerical
composition of $K$ with $W$ and also with $\widetilde W$ (see
\S\ref{sec.composing-periodic-maps}), the effective computation of the
step~\ref{eq.sn-traje} in Algorithm~\ref{alg.sn}, the stopping
criterion (see \S\ref{sec.stop-criterion}) and the choice of the
scaling factor (see \S\ref{sec.scaling-factor}).  On the other hand,
there are steps that we can use the same methods in the unperturbed
case, such as, the solution of linear systems like
step~\ref{alg.sn-linear-system} in Algorithm~\ref{alg.sn} via
Lemma~\ref{lem.poly-linear-system} or the solutions of the
cohomological equations via Proposition~\ref{pro.Fourier-solution}.

\subsubsection{Stopping criterion}
\label{sec.stop-criterion}
algorithms~\ref{alg.s0} and~\ref{alg.sn} require to stop the
iterations when prescribed tolerances have been reached. 
\begin{comment} 
Since we are in a fixed-point scheme, namely $x_{l+1} = \Psi(x_l)$
with $\Psi$ having a the smallest contraction rate $0<q<1$. Let us
define at each iteration step the quantity
\[
q_l \coloneq \frac{\lVert x_{l+1} - x_l \rVert}{\lVert x_l - x_{l-1}
  \rVert}, \qquad l \geq 2.
\]
The value $q _l$ is an approximation of $q$. Thus, if $q_l > 0.5$, we
stop when $\lVert x_l - x_{l-1} \rVert < \epsilon$.  Otherwise,
$\frac{q_l}{1-q_l} \lVert x_l - x_{l-1} \rVert < \epsilon$.
\end{comment}
Alternatively, one can stop when the invariance equation is satisfied
up to a given tolerance.

\subsubsection{Scaling factor}
\label{sec.scaling-factor}
As in the unperturbed case, if $W(\theta, s)$ is a solution, then
$W(\theta + \theta _0, bs)$ will be a solution too for any $\theta _0$
and $b$. A difference with the $\varepsilon=0$ case is that now
$K\circ W$ and $K\circ \widetilde W$ are required to be well-defined.
That means the second components of $W$ and $\widetilde W$ must lie in
$[-1,1]$.  Stronger conditions are
\[
p(s)= \sum _{j \geq 0} \lVert W _2^j(\theta) \rVert \lvert s \rvert ^j
\leq 1 \qquad \text{and} \qquad \widetilde{p}(s) = \sum _{j \geq 0}
\lVert \widetilde{W _2^j}(\theta) \rVert \lvert s \rvert ^j \leq 1.
\]
In the iterative scheme of Algorithm~\ref{alg.sn}, these series become
finite sums and a condition for the value $b > 0$ is led by the
upper-bound $\min\{s^\ast, \widetilde s^\ast\}$ where $s^\ast$ is the
value so that $p(s ^\ast) = 1$ and, similarly, $\widetilde s^*$ the
value verifying $\widetilde p(\widetilde s ^\ast) = 1$.  Notice that,
the solutions $s^\ast$ and $\widetilde s^\ast$ exist because $\lVert W
_2^0(\theta) \rVert < 1$, $\lVert \widetilde{W _2^0}(\theta) \rVert <
1$ and the polynomials are strictly positive for $s\geq 0$.

%  \subsection{Newton approach}
%  \colorbox{red}{maybe ToDo}

\section{Numerical composition of periodic maps}
\label{sec.composing-periodic-maps}
The goal of this section is to deeply discuss how we can numerically
compute $\widetilde W$, the compositions $K\circ W(\theta,s)$ and
$K\circ \widetilde W(\theta, s)$ only having a numerical
representation (or approximation) of $K$ and $W$ in the algorithms
\ref{alg.s0} and \ref{alg.sn}.

There are a variety of methods that can be employed to numerically get
the composition of a periodic mapping with another (or the same)
mapping.  Some of these methods depend strongly on the representation
of the periodic mapping and others only depend on specific parts of
the algorithm.

We start the discussion from the general methods to those that
strongly depend on the numerical representation. One expects that the
general ones will have a bigger numerical complexity or it will be
less accurate.

Before starting to discuss the algorithms, it is important to stress
again that for functions of two variables $(\theta, s) \in \torus
\times [-1,1]$, there are two complementary ways of looking at
them. We can think of them as functions that given $\theta$ produce a
polynomial in $s$ -- this polynomial valued function will be periodic
in $\theta$ -- or we can think of them as polynomials in $s$ taking
values in spaces of periodic functions (of the variable $\theta$).  Of
course, the periodic functions that appear in our interpretation can
be discretized either by the values in a grid of points or by the
Fourier transform.

Each of these -- equivalent! -- interpretations will be useful in some
algorithms.  In the second interpretation, we can ``overload''
algorithms for standard polynomials to work with polynomials whose
coefficients are periodic functions (in particular Horner schemes).
In the first interpretation, we can easily parallelize algorithms for
polynomials for each of the values of $\theta$ using the grid
discretization of periodic functions.

Possibly the hardest part of algorithms~\ref{alg.s0} and~\ref{alg.sn}
is the compositions between $K$ with $W$ and with $\widetilde W$. Due
to the step~\ref{eq.sn-traje} of Algorithm~\ref{alg.sn} the
composition should be done so that the output is still a polynomial in
$s$ with coefficients that are periodic functions of $\theta$.

In our implementation, we use the Automatic Differentiation (AD)
approach~\cite{Haro2016,Griewank2008}.

\begin{comment}
Note that in this section, we will use $K$, $W$ as generic names for
functions to be composed. They do not have necessarity the same
meaning than in the main text (even if ins some cases, they do).
\end{comment}

If $W(\theta,s) = (W _1(\theta,s),W _2(\theta,s))$ is a function of
two variables taking values in $\real^2$, then
\begin{equation}\label{eq.KW}
 K \circ W(\theta, s) = \sum _{j = 0}^{m-1} K ^j (W _1(\theta, s)))
 \left( b _0 W _2(\theta, s) \right)^j,
\end{equation}
which can be evaluated with $m-1$ polynomial products and $m-1$
polynomial sums using Horner scheme, once we have computed $K ^j \circ
W_1(\theta,s)$.

\begin{comment} 
In fact $K ^j ( \theta + \hat W _1 (\theta, s))$. That is, fixed
$\theta$ the problem is reduced to compute the composition of a
periodic mapping $K ^j$ with a polynomial in $s$ of order $0\leq k\leq
n$. Such a composition can be done with a complexity $\Theta(k^4)$.
Hence, for a fixed $\theta$, the computational complexity to compute
\eqref{eq.KW} will be $\Theta(mk^4)$ and $\Theta(mk^4n_\theta)$ for a
mesh of size $n _\theta$ for $\theta$.
\end{comment}

The problem of composing a periodic function with a periodic
polynomial in $s$ -- to produce a polynomial in $s$ taking values in
the space of periodic functions -- is what we consider now.

The most general method considers $\pfun$ a periodic function, the
$K^j$ in \eqref{eq.KW}, and $q(s)=\sum_{j=0}^k q_j s^j$ a polynomial
of a fixed order $k\geq 0$ where the $q_j$ are periodic functions of
$\theta$ that we consider discretized by their values in a grid.

% \marginpar{We need to work a bit more on this part}

We want to compute the polynomial $p\coloneq \pfun \circ q$ up to
order $k$.  Assume that $\frac{d ^ j}{d \theta^j}\pfun (q _ 0)$ for $0
\leq j \leq k$ are given as input and that they have been previously
computed in a bounded computational cost.  The chain rule gives us a
procedure to compute the coefficients of $p(s)=\sum_{j = 0}^k p _j
s^j$.

Indeed, one can build a table, whose entries are polynomials in $s$,
like in Table~\ref{table.traje-compo} following the generation rule in
Figure~\ref{fig.table-rule}.

\begin{figure}[ht]
 \begin{center}
  \includegraphics[scale=1]{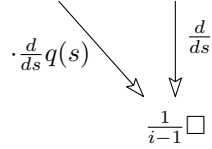}
 \end{center} 
 \caption{Generation rule for $i = 2, \dotsc, k+1$
   Table~\ref{table.traje-compo} entries}
\label{fig.table-rule} 
\end{figure}

The inputs of Table~\ref{table.traje-compo} are $a _{i,1} = 0$ for $i
\ne 1$ and $a _{2, 2} = \frac{d}{ds} q(s)$. Then the entries $a_{ij}$
with $2\leq j \leq i \leq k+1$ are given by
\begin{equation}\label{eq.compo-rule}
 a _{ij}(s) = \frac{1}{i-1} \left( \frac{d}{ds} a _{i-1,j}(s) +
  a _{i-1,j-1} (s)\frac{d}{ds}q(s)\right).
\end{equation}
Thus, the coefficients of $p(s)$ are $p_j=\sum _{l=0}^{k} a _{jl}(0)
\frac{d^l}{d\theta^l} \pfun (q_0)$ for $0 \leq j \leq k$.

\begin{table}[ht] 
 \[
   \renewcommand{\arraystretch}{2}
   \begin{array}{l||*{6}{c}}
     & \pfun (q_0) & \frac{d}{d\theta} \pfun (q_0) &
     \frac{d^2}{d\theta^2} \pfun (q_0) & \cdots &
     \frac{d^{k-1}}{d\theta^{k-1}} \pfun (q_0) &\frac{d^k}{d\theta^k}
     \pfun (q_0) \\ \hline \hline p_0 & 1 & 0 \\ p_1 & 0 &
     \frac{d}{ds} q(s) & 0 \\ p_2 & 0 &\frac{1}{2}\square &
     \frac{1}{2}\square \\ \vdots & \vdots & \vdots & \vdots & \ddots
     & 0 \\ p_{k-1} & 0 & \frac{1}{k-1}\square & \frac{1}{k-1}\square
     & \cdots & \frac{1}{k-1}\square & 0 \\ p_{k} & 0 &
     \frac{1}{k}\square & \frac{1}{k}\square & \cdots &
     \frac{1}{k}\square & \frac{1}{k}\square \\
   \end{array}
 \] 
 \caption{Composition of a function with a polynomial}
 \label{table.traje-compo}
\end{table}

Note that it is enough to store in memory $k$ entries of the
Table~\ref{table.traje-compo} to compute all the coefficients
$p _j$.  

Moreover, for each entry in the $i$th row with $i=2,\dotsc, k+1$, one
only needs to consider polynomials of degree $k+1-i$. Overall the
memory required is at most $\frac{1}{2}k(k+1)$. The number of
arithmetic operations following the rule~\eqref{eq.compo-rule} are
given by the Proposition~\ref{pro.table-complexity}.

\begin{pro}\label{pro.table-complexity}
 Let $\pfun $ be a real-periodic function and let $q(s)$ be a real
 polynomial of degree $k$.  Given $\frac{d^j}{d\theta^j} \pfun (q(0))$
 for $j = 0, \dotsc, k$.  The polynomial $\pfun \circ q$ can be
 performed using Table~\ref{table.traje-compo} with
 $\frac{1}{2}k(k+1)$ units of memory and $\Theta(k^4)$ multiplications
 and additions.
\end{pro}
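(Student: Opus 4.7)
The plan is to establish three things in sequence: correctness of the recursion, the stated memory count of $\tfrac{1}{2}k(k+1)$ polynomial entries, and the $\Theta(k^4)$ arithmetic complexity.

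\textbf{Correctness.} First I would prove by induction on $i$ the identity
\[
\frac{1}{(i-1)!}\frac{d^{i-1}}{ds^{i-1}}\pfun(q(s)) \;=\; \sum_{j=1}^{i} a_{ij}(s)\,\pfun^{(j-1)}(q(s)),
\]
with the natural conventions $a_{i,1}=0$ for $i\ge 2$ and $a_{i,i+1}=0$. The base case $i=1$ follows from $a_{1,1}=1$, and $i=2$ is just the chain rule. For the inductive step I would differentiate both sides in $s$, apply the chain rule to each $\pfun^{(j-1)}\circ q$, re-index $j\to j+1$ in the term carrying $q'(s)$, and collect coefficients of $\pfun^{(j-1)}(q(s))$; dividing by $i$ then matches exactly the recursion \eqref{eq.compo-rule} defining $a_{i+1,j}$. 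Setting $s=0$ and invoking Taylor's formula yields $p_{i-1}=\sum_{j=1}^{i} a_{ij}(0)\,\pfun^{(j-1)}(q_0)$, which is precisely what the table outputs.

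\textbf{Memory.} Since $a_{i,1}=0$ for $i\ge 2$, row $i$ carries only $i-1$ nontrivial polynomials, and the upper-triangular shape is preserved by \eqref{eq.compo-rule}. Because \eqref{eq.compo-rule} uses only row $i-1$ to form row $i$, one can overwrite in place by sweeping $j$ from large to small, so that $a_{i-1,j-1}$ is still intact when $a_{i,j}$ is written. Summing the number of polynomial entries over all rows gives $\sum_{i=2}^{k+1}(i-1)=\tfrac{1}{2}k(k+1)$, the stated bound. A separate short argument by backward induction establishes that each $a_{ij}(s)$ need only be tracked up to degree $k+1-i$: every further step of the recursion consumes one order through an $s$-derivative, so only Taylor coefficients of $a_{ij}$ up to order $k+1-i$ can contribute to any $a_{\ell j}(0)$ with $\ell\le k+1$.

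\textbf{Complexity.} Each application of \eqref{eq.compo-rule} comprises one formal differentiation (linear in the degree), one truncated multiplication of $a_{i-1,j-1}$ (degree at most $k+2-i$) by $q'(s)$ (degree $k-1$), truncated at degree $k+1-i$, one addition, and one scalar division. The dominant cost is the truncated product, which requires at most $\Theta((k+1-i)^2)$ scalar multiplications. Summing over the $i-1$ active entries of row $i$ and then over $i=2,\dots,k+1$,
\[
\sum_{i=2}^{k+1}(i-1)(k+1-i)^2 \;=\; \sum_{m=1}^{k} m\,(k-m)^2 \;=\; \Theta(k^4),
\]
by a direct polynomial calculation in $k$. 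The final assembly $p_{i-1}=\sum_{j}a_{ij}(0)\,\pfun^{(j-1)}(q_0)$ contributes only $O(k^2)$ and is absorbed into the bound.

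The main delicacy is the bookkeeping tying these pieces together: one must verify that truncating $a_{ij}$ to degree $k+1-i$ is consistent, through the $s$-derivative (which lowers degree by one) and through the truncated product with $q'(s)$, with the degree $k+2-i$ retained in row $i-1$, and that the right-to-left overwriting order respects the data dependencies in \eqref{eq.compo-rule}. Once this consistency is pinned down, the inductive correctness, the triangular entry count, and the polynomial summation combine to give the proposition.
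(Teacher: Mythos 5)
Your argument for the operation count takes a genuinely different and cleaner route than the paper's. The paper splits the nonzero entries of Table~\ref{table.traje-compo} into three regions --- the second column (where $a_{i-1,1}=0$ kills the product in \eqref{eq.compo-rule}, leaving only a derivative and a scalar division), the diagonal, and the interior --- and tallies multiplications and additions region by region with explicit closed-form sums, landing on $\frac{7}{12}k^4+\Theta(k^3)$ multiplications and $\frac{5}{12}k^4+\Theta(k^3)$ additions. You instead bound every entry uniformly by the cost of its truncated product, $\Theta\bigl((k+1-i)^2\bigr)$, and sum $\sum_{m=1}^{k}m(k-m)^2=\Theta(k^4)$. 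This over-counts the cheap second column by $\Theta(k^3)$, which is harmless for a $\Theta$-estimate, and is considerably easier to verify than the paper's bookkeeping. You also supply, by induction, the Fa\`a di Bruno--type identity certifying that the table entries really produce the Taylor coefficients of $\pfun\circ q$; the paper asserts this without proof, so this is a welcome addition.

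The memory count, however, is reached by an incorrect argument. You sum ``the number of polynomial entries over all rows,'' $\sum_{i=2}^{k+1}(i-1)$, but that is the total number of entries in the full table, which is not what one stores under the in-place overwriting you yourself describe --- and in any case ``units of memory'' in the statement counts scalar coefficients, not polynomial slots. The correct reasoning (what the paper sketches in the text preceding the proposition) is that in-place overwriting keeps $k$ slots, and slot $j$ must be sized for the largest degree that $a_{i,j}$ ever attains as $i$ ranges over $i\ge j$, namely $k+1-j$ at $i=j$; summing the coefficient counts $\sum_{j=2}^{k+1}(k+2-j)$ gives $\frac{1}{2}k(k+1)$. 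Your sum agrees with this numerically only because both reduce to $\sum_{m=1}^{k}m$; the reasoning needs to be replaced.
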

\begin{proof}
 Note that $k(k+1)$ multiplications and $(k+1)^2$ additions are needed
 to perform the product of two polynomials of degree $k$. Also $k$
 multiplications are needed to perform the derivative of a polynomial
 of degree $k$ multiplied by a scalar.  To bound the number of
 operations we must distinct three different situations of the
 Table~\ref{table.traje-compo}.
 \begin{enumerate}
  \item The column $a _{3..k, 2}$. $\sum \limits _{i=1}^{k-2} (k-i+1)
    = \frac{1}{2}(k^2+k-6)$ multiplications.
  \item The diagonal $a _{3..k, 3..k}$.
  \begin{itemize}
   \item $\sum \limits _{j=1}^{k-2} (k-j-1)(k-j+1)+1 = \frac{1}{6} (2
     k^3-3 k^2+k-6)$ multiplications.
   \item $\sum \limits _{j=1}^{k-2} (k-j-1)^2 + 1 = \frac{1}{6} (2
     k^3-9 k^2+19 k-18)$ additions.
  \end{itemize}
  \item The rest.
  \begin{itemize}
   \item $\sum \limits _{j=1}^{k-2} \sum \limits _{i=j+1}^{k-2}
     (k-i-1)(k-i+1) + (k-i-2) +1 = \frac{1}{12} (7 k^4-56 k^3+71
     k^2+38 k-24)$ multiplications.
   \item $\sum \limits _{j=1}^{k-2} \sum \limits _{i=j+1}^{k-2}
     (k-i-1)^2 + (k-i) + 1 = \frac{1}{12} (5 k^4-36 k^3+85 k^2-102
     k+72)$ additions.
  \end{itemize}
 \end{enumerate}
 Overall $\frac{7}{12}k^4 + \Theta(k^3)$ multiplications and
 $\frac{5}{12} k^4 + \Theta(k^3)$ additions.
\end{proof}

The next Theorem~\ref{thm.KW-tKW-complexity} summarizes the previous
explanations and it provides the complexities to numerically compute
$K\circ W$ in \eqref{eq.KW}. It assumes that $\frac{d^i}{d\theta^i}
\pfun (q _0)$ of Table~\ref{table.traje-compo} are given as input
because their computation strongly depends on the numerical
representation of a periodic mapping.
\begin{thm}\label{thm.KW-tKW-complexity}
 For a fixed $\theta$, the computational complexity to compute the
 compositions of $K(\theta, s) = \sum _{j=0}^{m-1} K ^j(\theta) (b _0
 s)^j$ with $W(\theta, s) = \sum _{j=0}^{k-1} W^j(\theta) (b s)^j$ and
 $\widetilde W(\theta, s)$ using Table~\ref{table.traje-compo} is
 $\Theta(mk^4)$ and space $\Omega(k^2)$ assuming
 $\frac{d^i}{d\theta^i} K ^j(W^0_1(\theta))$ as input for $i=0,\dotsc,
 k-1$.
\end{thm}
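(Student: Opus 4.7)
The plan is to reduce both $K \circ W$ and $K \circ \widetilde W$ to the scalar–polynomial composition already handled by Proposition~\ref{pro.table-complexity} and then assemble with Horner's scheme. Write
\[
K \circ W(\theta, s) = \sum_{j=0}^{m-1} \bigl(K^j \circ W_1\bigr)(\theta, s)\,\bigl(b_0 W_2(\theta, s)\bigr)^j,
\]
so that, for a fixed $\theta$, the task splits into (i) $m$ inner compositions $K^j \circ W_1$, and (ii) a Horner combination in the outer polynomial.

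For stage (i), each $K^j$ is periodic in $\theta$ and $W_1(\theta, s)$ is a polynomial in $s$ of degree $k-1$ whose constant coefficient is $W_1^0(\theta)$; this matches exactly the hypothesis of Table~\ref{table.traje-compo} with $\pfun = K^j$, $q = W_1$, and the provided derivatives $\frac{d^i}{d\theta^i} K^j(W_1^0(\theta))$, $0 \le i \le k-1$, as inputs. Proposition~\ref{pro.table-complexity} then gives $\Theta(k^4)$ arithmetic operations and $\Theta(k^2)$ workspace per composition, and the workspace may be reused across $j$, yielding $\Theta(mk^4)$ operations in total. For stage (ii), each product of two polynomials in $s$ of degree at most $k-1$ and each polynomial sum cost $\Theta(k^2)$; the Horner pass uses $O(m)$ such operations and so contributes only $\Theta(mk^2)$, strictly dominated by stage (i).

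The composition $K \circ \widetilde W$ is structurally identical once $\widetilde W$ is available in the same polynomial-in-$s$ form. The only difference is that the derivatives of $K^j$ must be evaluated at $\widetilde W_1^0(\theta)$ rather than at $W_1^0(\theta)$; since the $K^j$ are known periodic functions, these derivatives can be produced at a cost independent of $k$ and $m$, after which the previous count applies verbatim and delivers another $\Theta(mk^4)$ operations with the same $\Theta(k^2)$ workspace. Summing the two compositions preserves the $\Theta(mk^4)$ rate, and the space is bounded below by $\Omega(k^2)$ because Table~\ref{table.traje-compo} itself requires $\frac{1}{2}k(k+1)$ polynomial entries.

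The only genuine obstacle is bookkeeping: one must check that the auxiliary buffers for the Horner accumulator and for the powers $(b_0 W_2)^j$ do not inflate the memory beyond $\Theta(k^2)$, and that the preparation of $\widetilde W$ itself (through the shift in $\theta$ and the exponential in $s$, done via the power-series recurrence recalled before Lemma~\ref{lem.poly-linear-system}) is also dominated by $\Theta(mk^4)$. Each of these checks reduces to counting a bounded number of polynomial products of degree $\le k-1$, so none of them upsets the claimed $\Theta(mk^4)$ operation count or the $\Omega(k^2)$ memory lower bound.
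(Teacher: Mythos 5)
Your proof matches the argument the paper leaves implicit: split $K\circ W$ via equation \eqref{eq.KW} into $m$ inner compositions $K^j\circ W_1$ (each costing $\Theta(k^4)$ by Proposition~\ref{pro.table-complexity}, with $\Theta(k^2)$ reusable workspace from Table~\ref{table.traje-compo}) plus an outer Horner pass whose $\Theta(mk^2)$ cost is dominated, giving $\Theta(mk^4)$ time and $\Omega(k^2)$ space, and repeat for $K\circ\widetilde W$. One minor imprecision: you assert the derivatives $\frac{d^i}{d\theta^i}K^j(\widetilde W_1^0(\theta))$ ``can be produced at a cost independent of $k$ and $m$,'' but there are $mk$ such quantities, so the cost is not literally independent of $k,m$; what is true (and what matters) is that their preparation is $O(mk)$ at worst and hence dominated by $\Theta(mk^4)$ --- in fact the theorem is best read as assuming these derivatives are given as input for both $W_1^0$ and $\widetilde W_1^0$, exactly as Remark~\ref{rem.KW-tKW-complexity} emphasizes that the cost of producing them depends on the representation of $K^j$ and is deferred to the Fourier case.
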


\begin{remark}
  \label{rem.KW-tKW-complexity}
  In general, if $n _\theta$ denotes the mesh size of the variable
  $\theta$, we will have $k \leq m \ll n _\theta$. That is, the mesh
  size will be much larger than the degree (in $s$) of $K(\theta,s)$.
  That means that the parallelization in $n _\theta$ will be more
  advantageous.

  Theorem~\ref{thm.KW-tKW-complexity} has an important assumption
  involving $\frac{d^i}{d \theta^i} K ^j (W_1^0(\theta))$ which can
  have a big impact in the complexity of $K\circ
  W(\theta,s)$. However, such an impact strongly depends on the
  numerical representation of $K ^j$ and it will be discussed in the
  Fourier representation case.
\end{remark}
%  \subsection{Splines}
 
%  \subsection{Fourier representation}

 \subsection{Composition in Fourier}
 Theorem~\ref{thm.KW-tKW-complexity} reduces the problem of computing
 $K\circ W(\theta,s)$ in \eqref{eq.KW} to the problem of computing
 composition of a periodic function with another one. Such a
 composition of real Fourier truncated series may require to know the
 values not in the standard equispaced mesh of $\theta$ which hampers
 the use of the FFT.  A direct composition of real Fourier series
 requires a computational complexity $\Theta(n _\theta ^ 2)$. However
 it can be performed with $\Theta(n _\theta \log n _\theta)$ by the
 \textsc{nfft3}, see \cite{Keiner2009}. The package \textsc{nfft3}
 allows to express $\pfun \colon \mathbb{T} \rightarrow \mathbb{R}$
 with the same coefficients in \eqref{eq.dft-top} and perform its
 evaluation in an even number of non-equispaced nodes $(\theta _k) _{k
   = 0}^{n _\theta-1}\subset \mathbb{T}$ by
\begin{equation}\label{eq.ndft-top}
 \pfun(\theta _k) = \sum _{j = 0}^{n _\theta - 1} \widehat \pfun ^j
 e^{-2\pi \I (j - n _\theta/2) (\theta _k - 1/2)}.
\end{equation}
The corrections of $\theta _k$ in \eqref{eq.ndft-top} is because
\textsc{nfft3} considers $\mathbb{T} \simeq [-1/2, 1/2)$ rather than
  the other standard equispaced discretization in $[0,1)$.
    \textsc{nfft3} uses some window functions for a first
    approximation as a cut-off in the frequency domain and also for a
    second approximation as a cut-off in time domain. It takes under
    control (by bounds) these approximations to ensure the solution is
    a good approximation.  Joining these result with
    Proposition~\ref{pro.table-complexity} we can rewrite
    Theorem~\ref{thm.KW-tKW-complexity} as
\begin{thm}
\label{thm.KW-tKW-fourier-complexity}
 The computational complexity to compute in Algorithm~\ref{alg.sn} the
 compositions of $K(\theta, s) = \sum _{j=0}^{m-1} K ^j(\theta) (b _0
 s)^j$ with $W(\theta, s) = \sum _{j=0}^{k-1} W^j(\theta) (b s)^j$ and
 $\widetilde W(\theta, s)= \sum _{j=0}^{k-1} \widetilde W^j(\theta) (b
 s)^j$ using Table~\ref{table.traje-compo} and \textsc{nfft3}, and
 assuming that $K ^j$, $W ^j$ and $\widetilde W^j$ are expressed with
 $n _\theta$ Fourier coefficients is $\Theta(m k^4 n _\theta + m k n
 _\theta \log n _\theta)$.  The space complexity is $\Omega(k n
 _\theta + k^2)$.
\end{thm}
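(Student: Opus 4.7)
The plan is to decompose the cost of computing $K\circ W(\theta,s)$ (and, identically, $K\circ\widetilde W(\theta,s)$) into three separable pieces: (i) the preparation of all the scalar inputs $\tfrac{d^i}{d\theta^i}K^j(W_1^0(\theta))$ needed by Table~\ref{table.traje-compo}, (ii) the $mn_\theta$ invocations of that table to obtain the truncated polynomials $K^j\circ W_1(\theta,s)$ in $s$, and (iii) the Horner combination in \eqref{eq.KW} that assembles $\sum_j K^j(W_1)(b_0 W_2)^j$. I would bound each piece separately and then check that the sum matches the stated complexity.

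For (ii), Proposition~\ref{pro.table-complexity} already gives $\Theta(k^4)$ arithmetic operations per table instance once its $k$ scalar inputs are available; there are $m$ choices of $j$ and $n_\theta$ mesh values of $\theta$, so this contributes $\Theta(mk^4n_\theta)$. For (iii), polynomial-times-polynomial and polynomial addition in $s$ (with periodic-function coefficients represented by $n_\theta$ samples) cost $\Theta(k^2 n_\theta)$ and $\Theta(k n_\theta)$ respectively, and a Horner pass does $m-1$ of each, giving $\Theta(mk^2 n_\theta)$, which is absorbed by the $\Theta(mk^4 n_\theta)$ term. So the only delicate piece is (i).

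The hard part is therefore the evaluation of $\tfrac{d^i}{d\theta^i}K^j$ at the points $W_1^0(\theta)$ for $\theta$ ranging over the uniform Fourier mesh. Differentiating in Fourier is free in the asymptotic count (it just multiplies each coefficient by the appropriate $2\pi\I\nu$), so the remaining issue is evaluating a periodic function given by $n_\theta$ Fourier coefficients at $n_\theta$ non-equispaced nodes, namely $\{W_1^0(\theta_\ell)\}_{\ell=0}^{n_\theta-1}$, which differ from an equispaced grid because $W_1^0(\theta)=\theta+\hat W_1^0(\theta)$. Here I would cite the \textsc{nfft3} implementation of the nonequispaced evaluation \eqref{eq.ndft-top}, which performs one such batched evaluation in $\Theta(n_\theta\log n_\theta)$. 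Since we need this for each of the $mk$ pairs $(i,j)$ with $0\le i<k$ and $0\le j<m$, step (i) costs $\Theta(mk\,n_\theta\log n_\theta)$. Adding the three contributions gives the claimed $\Theta(mk^4 n_\theta + mk\,n_\theta\log n_\theta)$.

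For the space bound I would argue as follows: storing the inputs $K^j$, $W^j$, and $\widetilde W^j$ in Fourier representation takes $\Theta(mn_\theta)$, $\Theta(kn_\theta)$, $\Theta(kn_\theta)$ respectively; the working memory for each table instance is $\frac{1}{2}k(k+1)=\Theta(k^2)$ by Proposition~\ref{pro.table-complexity}, and tables at different mesh points can be processed sequentially (or with modest parallel overhead) so their working storage is not cumulative. Adding the polynomial output buffer of size $\Theta(kn_\theta)$ yields the stated $\Omega(kn_\theta+k^2)$. The main source of subtlety I expect is convincing the reader that the NFFT3 step truly achieves $n_\theta\log n_\theta$ uniformly over our data, and that the tolerance introduced by its window-function cut-offs is negligible relative to the truncation errors already made elsewhere in algorithms~\ref{alg.s0} and~\ref{alg.sn}; this is the only non-bookkeeping ingredient in the proof and the point where a careful citation to \cite{Keiner2009} is required.
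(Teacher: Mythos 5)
Your proposal is correct and takes essentially the same route the paper does: the paper obtains Theorem~\ref{thm.KW-tKW-fourier-complexity} by ``joining'' the $\Theta(n_\theta\log n_\theta)$ nonequispaced evaluation cost from \textsc{nfft3} \cite{Keiner2009} with Theorem~\ref{thm.KW-tKW-complexity} and Proposition~\ref{pro.table-complexity}, exactly the three-part split (precompute the $mk$ batches of derivative values at the nonequispaced nodes $W^0_1(\theta_\ell)$ and $\widetilde W^0_1(\theta_\ell)$, run the $\Theta(k^4)$ table at each of the $mn_\theta$ pairs $(\theta_\ell,j)$, then a Horner pass that is absorbed) that you spell out. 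Your space accounting and the remark that Fourier differentiation is asymptotically free are likewise consistent with the paper's implicit reasoning.
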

\begin{remark}
  The remark~\ref{rem.KW-tKW-complexity} also applies to
  Theorem~\ref{thm.KW-tKW-fourier-complexity} in terms of the
  parallelization of $n _\theta$ due to the fact that in general $k
  \leq m \ll n _\theta$. However, in the parallelism case, the space
  complexity increase to $\Omega(k n _\theta + k^2 n_p)$ with $n _p$
  the number of processes although the part corresponding to $k n
  _\theta$ can be shared memory.

  In particular, the \textsc{nfft3} can also be used for the zero
  order $W^0$ of Algorithm~\ref{alg.s0} giving in that case the same
  complexity as Theorem~\ref{thm.KW-tKW-fourier-complexity} but with
  $k = 1$.
\end{remark}

\subsection{Automatic Differentiation in Fourier}
 \label{sec.ad-fourier}
Theorem~\ref{thm.KW-tKW-complexity} tells us that the composition $K
\circ W (\theta, s)$ can numerically be done independently of the
periodic mapping representation. Nevertheless, differentiation is a
notoriously ill-posed problem due to the lack of information in the
discretized problem. Thus, Table~\ref{table.traje-compo} is a good
option when no advantage of the computer periodic representation
exists or $k \ll m$.

Using the representation \eqref{eq.dft-top-real}, we can use the
Taylor expansion of the sine and cosine by recurrence
\cite{Knuth1981,Haro2016}.  That is, if $q(s)$ is a polynomial, then
$\sin q(s)$ and $\cos q(s)$ are given by $s _0 = \sin q _0$, $c _0 =
\cos q _0$ and for $j \geq 1$,
\begin{equation}
 s _j = \frac{1}{j} \sum _{k=0}^{j-1} (j-k) q _{j-k} c _k, \qquad 
 c _j = -\frac{1}{j} \sum _{k=0}^{j-1} (j-k) q _{j-k} s _k.
\end{equation}
Therefore the computational cost to obtain the sine and cosine of a
polynomial is linear with respect to its degree.

Theorem~\ref{thm.KW-tKW-adfourier-complexity} says that the
composition of $K$ with $W$ or $\widetilde W$ are rather than
$\Theta(m k^4 n _\theta + m k n _\theta \log n _\theta)$ like in
Theorem~\ref{thm.KW-tKW-fourier-complexity} just $\Theta(m k n
_\theta^2)$.  Therefore if $k \ll m$ and $n _\theta$ is large, the
approach given by Theorem~\ref{thm.KW-tKW-fourier-complexity} has a
better complexity although
Theorem~\ref{thm.KW-tKW-adfourier-complexity} will be more stable for
larger $k$.
\begin{thm}
\label{thm.KW-tKW-adfourier-complexity}
 The computational complexity to compute in Algorithm~\ref{alg.sn} the
 compositions of $K(\theta, s) = \sum _{j=0}^{m-1} K ^j(\theta) (b _0
 s)^j$ with $W(\theta, s) = \sum _{j=0}^{k-1} W^j(\theta) (b s)^j$ and
 $\widetilde W(\theta, s)= \sum _{j=0}^{k-1} \widetilde W^j(\theta) (b
 s)^j$ using Automatic Differentiation and assuming that $K ^j$, $W
 ^j$ and $\widetilde W^j$ are expressed with $n _\theta$ Fourier
 coefficients is $\Theta(m k n _\theta ^2)$.
\end{thm}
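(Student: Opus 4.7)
The plan is to bound the cost at a single grid point $\theta$ in the uniform mesh of size $n_\theta$, then multiply by $n_\theta$. Fix such a $\theta$. Then $W_1(\theta,s)$ and $W_2(\theta,s)$ are scalar polynomials in $s$ of degree $k-1$, and the goal is to assemble
\[
 K\circ W(\theta,s) = \sum_{j=0}^{m-1} K^j\bigl(W_1(\theta,s)\bigr)\,\bigl(b_0 W_2(\theta,s)\bigr)^j,
\]
truncated at degree $k-1$ in $s$. Using the real Fourier expansion \eqref{eq.dft-top-real} of each $K^j$, the core difficulty reduces to producing the degree-$(k-1)$ polynomials $\cos(2\pi l\,W_1(\theta,s))$ and $\sin(2\pi l\,W_1(\theta,s))$ for $l = 1, \dotsc, \lceil n_\theta/2\rceil$.

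First I would compute $\cos(2\pi W_1(\theta,s))$ and $\sin(2\pi W_1(\theta,s))$ using the sine/cosine AD recurrence displayed in Section~\ref{sec.ad-fourier}; the $j$-th coefficient of each is a sum of $j$ products, so the full pair costs $O(k^2)$. Next, to reach higher multiples I would iterate the trigonometric addition formula $\cos((l+1)\alpha) = 2\cos\alpha\,\cos(l\alpha) - \cos((l-1)\alpha)$ (and analogously for $\sin$), or equivalently take successive products with $E = \cos(2\pi W_1) + \I\sin(2\pi W_1)$. Each step is a bounded number of truncated multiplications of degree-$(k-1)$ polynomials in $s$, costing $O(k^2)$ per $l$ and thus $O(n_\theta k^2)$ per grid point. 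Crucially, these sine/cosine tables are computed once and reused across all $j$.

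For each $j = 0, \dotsc, m-1$, one then assembles $K^j(W_1(\theta,s))$ as $\tfrac{a_0^j}{2} + \sum_l (a_l^j\cos(2\pi l W_1) + b_l^j\sin(2\pi l W_1))$. Each of the $\Theta(n_\theta)$ summands is a scalar times a degree-$(k-1)$ polynomial, for $O(n_\theta k)$ per $j$ and $O(m n_\theta k)$ summed over $j$. Finally the Horner combination $\sum_j K^j(W_1)(b_0 W_2)^j$ uses $O(m)$ truncated polynomial products of degree-$(k-1)$ polynomials, costing $O(m k^2)$. Adding these up, the per-grid-point cost is $O(n_\theta k^2 + m n_\theta k + m k^2)$, which in the regime $k \leq m \leq n_\theta$ of Remark~\ref{rem.KW-tKW-complexity} simplifies to $O(m n_\theta k)$; multiplying by the $n_\theta$ grid points gives $\Theta(m k n_\theta^2)$. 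The same argument with $\widetilde W$ in place of $W$ handles $K\circ\widetilde W$, since $\widetilde W(\theta,s)$ is itself a polynomial in $s$ of degree $k-1$ whose coefficients are periodic in $\theta$ with $n_\theta$ Fourier modes.

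The main obstacle is book-keeping rather than conceptual: one must reuse the precomputed sine/cosine tables across the $m$ values of $j$ (otherwise a spurious factor $m$ would multiply $n_\theta k^2$), and one must invoke the regime $k \leq m$ so that the dominant summand is $m n_\theta k$ rather than $n_\theta k^2$. Once these two points are correctly accounted for, the $\Theta(m k n_\theta^2)$ bound follows by summing the per-grid-point cost over the $n_\theta$ nodes.
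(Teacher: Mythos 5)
Your proof is sound and, as far as I can tell, reconstructs the calculation the authors had in mind; the paper states Theorem~\ref{thm.KW-tKW-adfourier-complexity} without a displayed proof, so your write-up is a useful filling-in rather than a divergence from the text. The decomposition you choose (fix a grid node $\theta$, build a table of $\cos(2\pi l\,W_1)$ and $\sin(2\pi l\,W_1)$ as degree-$(k-1)$ polynomials in $s$, assemble each $K^j\circ W_1$ by a scalar combination of these table entries, then a Horner pass in $b_0 W_2$) is the natural algorithm behind the ``Automatic Differentiation in Fourier'' discussion of \S\ref{sec.ad-fourier}, and your per-node tally $O(n_\theta k^2 + m n_\theta k + m k^2)$ together with the regime $k\leq m\ll n_\theta$ from Remark~\ref{rem.KW-tKW-complexity} correctly isolates the assembly step $\Theta(m n_\theta k)$ as dominant, yielding $\Theta(m k n_\theta^2)$ after summing over the $n_\theta$ nodes. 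You are also right to flag that the sine/cosine table must be shared across the $m$ indices $j$; without that, the table cost would pick up an extra factor $m$ and the total would degrade to $\Theta(m k^2 n_\theta^2)$.

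Two remarks for completeness. First, you correctly cost the sine/cosine recurrence at $\Theta(k^2)$ (each new coefficient is a length-$j$ convolution); the paper's prose in \S\ref{sec.ad-fourier} informally calls this cost ``linear,'' which appears to be a slip, but it does not alter the final bound because the assembly step dominates in the stated regime. Second, your statement of $\Theta$ rather than merely $O$ is justified because the assembly loop genuinely performs $\Theta(n_\theta)$ length-$k$ scalar-times-polynomial operations for each of the $m$ values of $j$ at each of the $n_\theta$ grid nodes, so the lower bound matches. The remaining overhead you implicitly use -- FFTs to pass the $W^j$ and $\widetilde W^j$ between Fourier and grid representation -- costs only $O(k\, n_\theta\log n_\theta)$ and is subdominant, so it is fine to leave it tacit.
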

 
%  \subsection{Spline representation}
%  
%  \subsubsection{Norms and smoothness}
%  \cite{Obaya1999}
%  
%  \subsubsection{Composition in Splines}
%  
%  \subsubsection{Automatic Differentiation in Splines}
 
 \section{Numerical results}
 \label{sec.experiment}
%  \subsection{Rayleigh oscillator}
%  \begin{align*}
%   \dot x&= y \\
%   \dot y&= \mu (1-\dot x^2)\dot x - x + 
%   \varepsilon P(x,y, \widetilde x, \widetilde y)
%  \end{align*}
%  $P(x,y, \widetilde x, \widetilde y) = x + y + \widetilde x + 
% \widetilde y$ and $r(x,
 The van der Pol oscillator \cite{Vanderpol1920} is an oscillator with
 a non-linear damping governed by a second-order differential
 equation.
 
The state-dependent perturbation of the van der Pol oscillator in
\cite{Hou2015} has the form
 \begin{equation}\label{eq.vdp-delayed}
  \begin{split}
   \dot x(t)&= y(t), \\
   \dot y(t)&= \mu(1-x(t)^2)y(t) - x(t) + \varepsilon x(t-r(x(t))),
  \end{split}
 \end{equation}
with $\mu > 0$ and $0 < \varepsilon \ll 1$. For the delay function
$r(x)$ we are going to consider two cases. A pure state-dependent
delay case $r(x) = 0.006 e^{2 x}$ or just a constant delay case $r(x)
= 0.006$.

The first step consists in computing the change of coordinate $K$, the
frequency $\omega _0$ of the limit cycle and its stability value
$\lambda _0 < 0$ for $\varepsilon = 0$. By standard methods of
computing periodic orbits and their first variational equations, we
compute the limit cycles close to $(x,y)=(2,0)$ for different values
of $\mu$. Table~\ref{table.vdp-omega0-lambda0} shows the values of
$\omega _0$ and $\lambda _0$ for each of those values of the parameter
$\mu$.

\begin{table}[ht]
 \[
  \begin{array}{l|c|c}
   \mu  & \omega _0                   & \lambda _0 \\ \hline \hline
   0.25 & \mathtt{0.1585366857025485} & \mathtt{-0.2509741760777654} \\
   0.5  & \mathtt{0.1567232109993800} & \mathtt{-0.5077310891698608} \\
   1    & \mathtt{0.1500760842377394} & \mathtt{-1.0593769948418550} \\
   1.5  & \mathtt{0.1409170454968141} & \mathtt{-1.6837946490433340} 
  \end{array}
 \]
 \caption{Values of $\omega _0$ and $\lambda _0$ for different values
   of the parameter $\mu$ in eq.~\eqref{eq.vdp-delayed} with
   $\varepsilon = 0$}\label{table.vdp-omega0-lambda0}
\end{table}

The computation of $K(\theta,s)$, following
Algorithm~\ref{alg.huguet}, up to order $16$ in $s$ and with a Fourier
mesh size of $1024$ allows to plot the isochrons in
Figure~\ref{fig.vdp-iso-huguet}.

In the case of ODE's, the isochrons computed by evaluating the
expansion can be globalized by integration of the ODE
\eqref{eq.vdp-delayed} forward and backward in time, see
\cite{Huguet2013}.  In the case of the SDDE, $\varepsilon \ne 0$,
propagating backwards is not possible. We hope that this limitation
can be overcome, but this will require some new rigorous developments
and more algorithms.  We think that this is a very interesting
problem.

 \begin{figure}[ht]
  \begin{center}
    \input{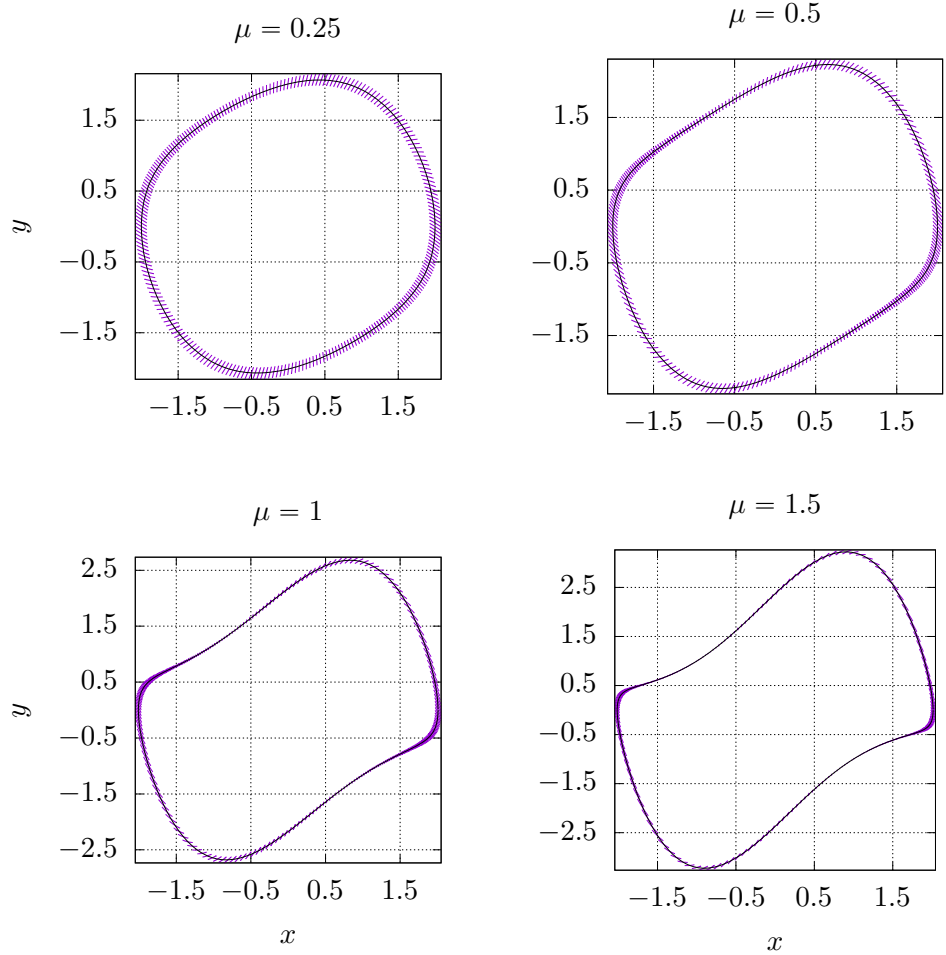}
    \caption{Limit cycles and their isochrons for different values of
      the parameter $\mu$ in the unperturbed,
      eq.~\eqref{eq.vdp-delayed}}
\label{fig.vdp-iso-huguet}
  \end{center}
 \end{figure}
A relevant indicator for engineers is the power spectrum, i.e. the
square of the modulus of the complex Fourier coefficients. In
Figure~\ref{fig.vp-power-huguet} we illustrate the power spectrum for
$K^ 0$, since $K ^0$ is the one that is commonly observed in a circuit
system.
 
 \begin{figure}[ht]
  \begin{center}
    \input{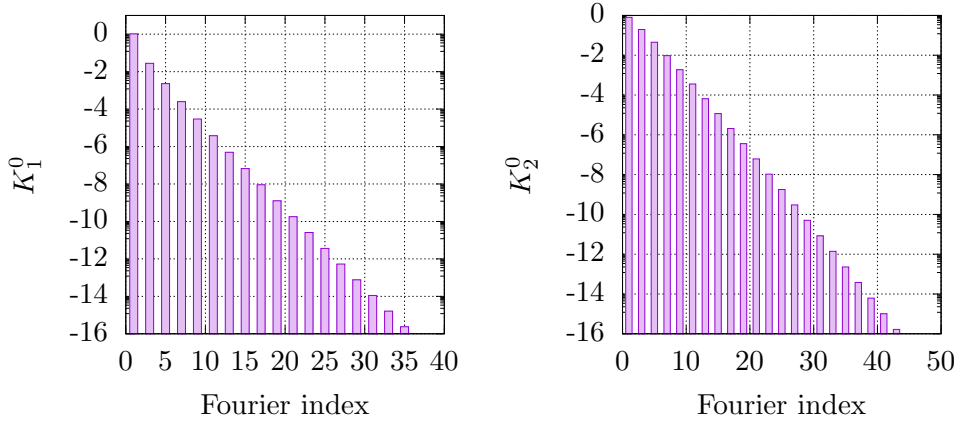}
    \caption{Logscale of the power spectrum of $K^0 \equiv (K^0 _1,
      K^0 _2)$ for $\mu=1.5$ and $\varepsilon = 0$ in
      eq.~\eqref{eq.vdp-delayed}}
\label{fig.vp-power-huguet}
  \end{center}
 \end{figure}

Due to the quadratic convergence of the Algorithm~\ref{alg.huguet},
see \cite{Huguet2013}, the computation of
Table~\ref{table.vdp-omega0-lambda0} and
Figure~\ref{fig.vdp-iso-huguet} are performed in less than one min in
a today standard laptop. However, we notice that for values of $\mu >
1.5$ the method may not converge for the unperturbed case, the scaling
factor and the Fourier mesh size need to be smaller due to spikes,
especially for the high orders in $s$, i.e. $K ^j(\theta)$ for large
$j$. This is an inherent drawback of the numerical representation of
periodic functions that can be emphasized with the model involved.
 
 \subsection{Perturbed case}
Let us analyze the case of $\mu = 1.5$ for two different types of
delay functions; a constant one $r(x) = 0.006$ and a state-dependent
one $r(x) = 0.006 e ^x$.
 
The two cases have some advantages to be exploited. For instance, in
the constant case $\widetilde W(\theta,s) = W(\theta - \omega \beta, s
e^{-\lambda \beta})$ is easier to compute than in the state-dependent
case. Since in both cases $W$ and $\widetilde W$ must be composed by
$K$, the use of automatic differentiation for the step
\ref{eq.sn-traje} in Algorithm~\ref{alg.sn} is still needed. In
particular, for the Algorithm~\ref{alg.s0} and the composition via
Theorem~\ref{thm.KW-tKW-fourier-complexity}, the \textsc{nfft3} can be
used to perform the numerical composition of $K$ with $W$ and
$\widetilde W$.
\begin{comment} 
Since the inequalities of the \emph{a posteriori} theorems in
\cite{Jiaqi2020} were not checked to be sharp enough for a rigorous
validation, let us choose different values for $\varepsilon$. More
concretely, $\varepsilon \in \{0.0001, 0.001, 0.01\}$. We want to
illustrate difficulties of the method that must be taken into
consideration.
\end{comment}
 
The first steps of our method get $\omega $ and $\lambda$ which we
distinguish their values depending on the delay function and the
parameter $\varepsilon$. Again here we are assuming $\mu=1.5$. These
values are summarized respectively in Tables~\ref{table.vdp-omega} and
\ref{table.vdp-lambda}. They were computed fixing a tolerance for the
stopping criterion of $10^{-10}$ in double-precision. As one expects
they are close to those in Table~\ref{table.vdp-omega0-lambda0} and
are further as $\varepsilon$ increase. Moreover we report a speed
factor around $2.25$ using the \textsc{nfft3} with respect to a direct
implementation of the Fourier composition.

\begin{table}[ht]
 \[
  \begin{array}{c||c|c}
   \varepsilon  & \omega _s & \omega _c  \\ \hline \hline
   10^{-4} & \mathtt{0.140908673246532} & \mathtt{0.140908547470887} \\
   10^{-3} & \mathtt{0.140833302396846} & \mathtt{0.140832045466042} \\
   10^{-2} & \mathtt{0.140077545298062} & \mathtt{0.140065058638519} 
  \end{array}
 \]
 \caption{Values of $\omega$ for different values of $\varepsilon$ in
   eq.~\eqref{eq.vdp-delayed} with $\mu = 1.5$ obtained by
   Algorithm~\ref{alg.s0}. $\omega _s$ corresponds to the
   state-dependent delay and $\omega _c$ the constant
   delay}\label{table.vdp-omega}
\end{table}

\begin{table}[ht]
 \[
  \begin{array}{c||c|c}
   \varepsilon  &  \lambda _s &  \lambda _c  \\ \hline \hline
   10^{-4} & \mathtt{-1.6838123845562083} 
           & \mathtt{-1.6838091880373793} \\
   10^{-3} & \mathtt{-1.6839721186835845} 
           & \mathtt{-1.6839401491442914} \\
   10^{-2} & \mathtt{-1.6855808865357260} 
           & \mathtt{-1.6852607528946115} 
  \end{array}
 \]
 \caption{Values of $\lambda$ for different values of $\varepsilon$ in
   eq.~\eqref{eq.vdp-delayed} with $\mu = 1.5$ obtained by
   Algorithm~\ref{alg.sn}. $\lambda _s$ corresponds to the
   state-dependent delay and $\lambda _c$ the constant
   delay}\label{table.vdp-lambda}
\end{table}

Figure~\ref{fig.vp-inv} shows, for different values of $\varepsilon$
in eq.~\eqref{eq.vdp-delayed}, the logarithmic error of invariance
equation for each of the different orders $j \geq 0$. That is, the
finite system of invariance equations obtained after plugging $W
(\theta, s) = \sum W ^j(\theta) s^j$ into eq.~\eqref{eq.inv} and
matching terms of the same order. The state-dependent case needs
smaller values of $\varepsilon$ to satisfy the invariance equation
while the constant delay case admits larger values of $\varepsilon$
which can be deduced from the inequalities in \cite{Jiaqi2020}.

 \begin{figure}[ht]
  \centering
  \input{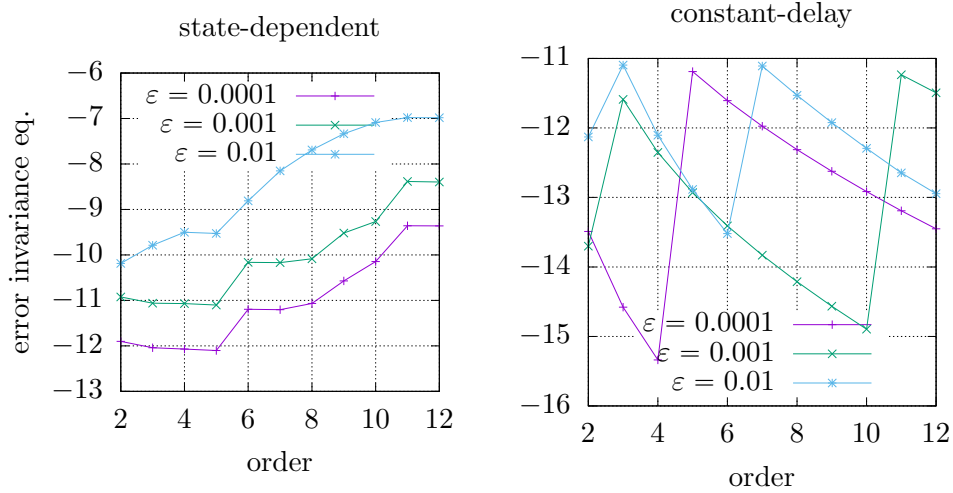}
  \caption{Log10 scale of the 2-norm of the error in the invariance
    equation}\label{fig.vp-inv}
 \end{figure}

 Figures~\ref{fig.vp-norm} shows the difference between the isochrons
 for the perturbed and unperturbed case. As one expects from the
 theorems in \cite{Jiaqi2020}, the error is smaller as the
 perturbation parameter value $\varepsilon$ becomes smaller.
 
 \begin{figure}[ht]
  \centering
  \input{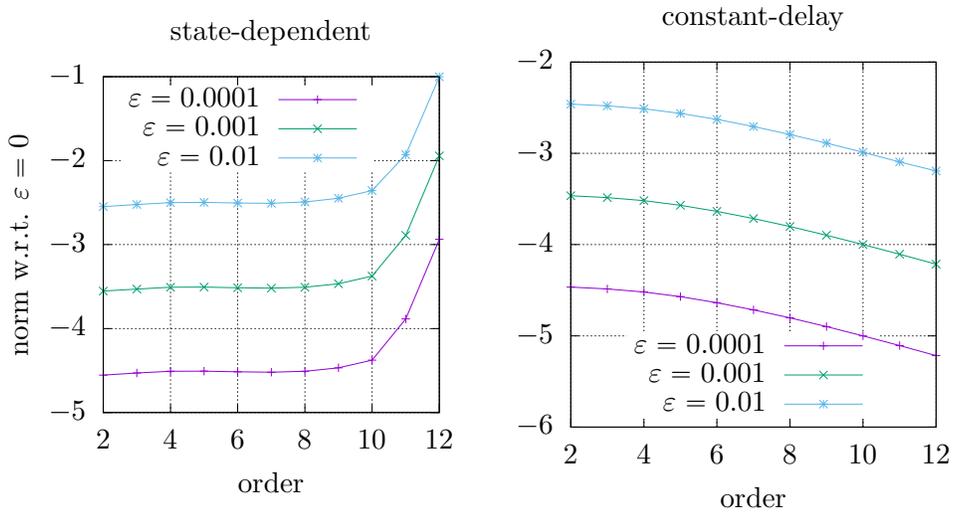}
  \caption{Log10 scale of the 2-norm of the difference between the
    perturbed and unperturbed cases. That is, $\lVert K ^ j - (K\circ
    W)^j \rVert$}\label{fig.vp-norm}
 \end{figure}
 
 An important point in Algorithm~\ref{alg.sn} is the well-definedness
 of the composition of $K$ with $W$ and $\widetilde W$. Because the
 state-dependent delays consider much more situations than just the
 constant delay, one expects that potentially smaller scaling factor
 compared to the constant delay will be needed as large order is
 computed. Figure~\ref{fig.vp-scaling} shows if $\varepsilon$ is far
 from the unperturbed case it will need to be smaller, that for the
 constant case is enough to use a constant scaling factor and for the
 state-dependent it decrease drastically in the first orders.
 
 \begin{figure}[ht]
  \centering
  \input{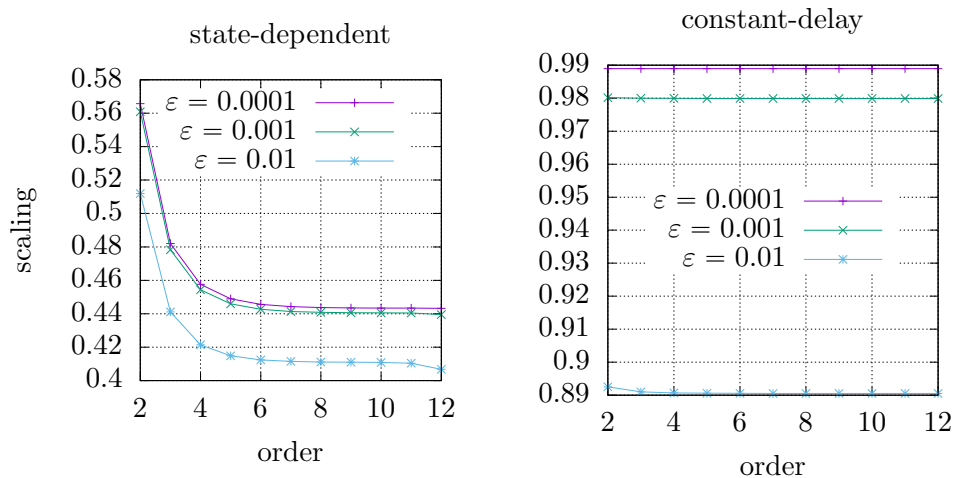}
  \caption{Scaling factor to ensure that the composition of $K$ with
    $W$ and with $\widetilde W$ in Algorithm~\ref{alg.sn} are
    well-defined}\label{fig.vp-scaling}
 \end{figure}
 
 To illustrate the physical observation the Figures~\ref{fig.vp-power}
 and \ref{fig.vp-ct-power} shows the power spectra of the limit cycles
 after the perturbations. More concretely, Figure~\ref{fig.vp-power}
 displays the power spectrum of $(K\circ W)^0$ for the pure
 state-dependent delay case and $\varepsilon = 0.01$. In contrast with
 Figure~\ref{fig.vp-power-huguet}, we observe that for the even
 indexes they have non-zero values in the double-precision arithmetic
 sense.  On the other hand, Figure~\ref{fig.vp-ct-power} shows that
 these non-zero values in the even indexes are not present in the
 constant delay case and the power spectrum for the case $\varepsilon
 > 0$ is away from that when $\varepsilon=0$ as $\varepsilon$
 increase.
 
 \begin{figure}[ht]
  \centering
  \input{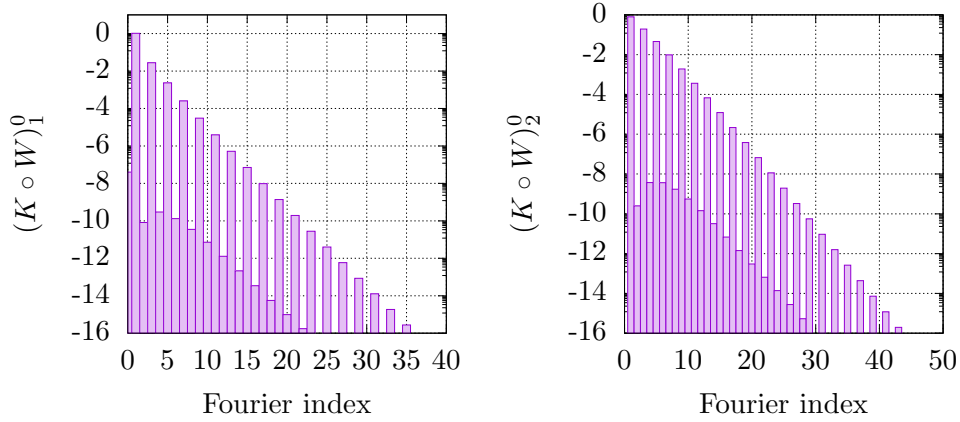}
  \caption{Log10 scale of the power spectrum of $(K\circ W)^0$ for
    $\mu=1.5$, $\varepsilon = 0.01$ and the state-dependent delay
    $r(x) = 0.006e^x$ in
    eq.~\eqref{eq.vdp-delayed}}\label{fig.vp-power}
 \end{figure}
 
 \begin{figure}[ht]
  \centering
  \input{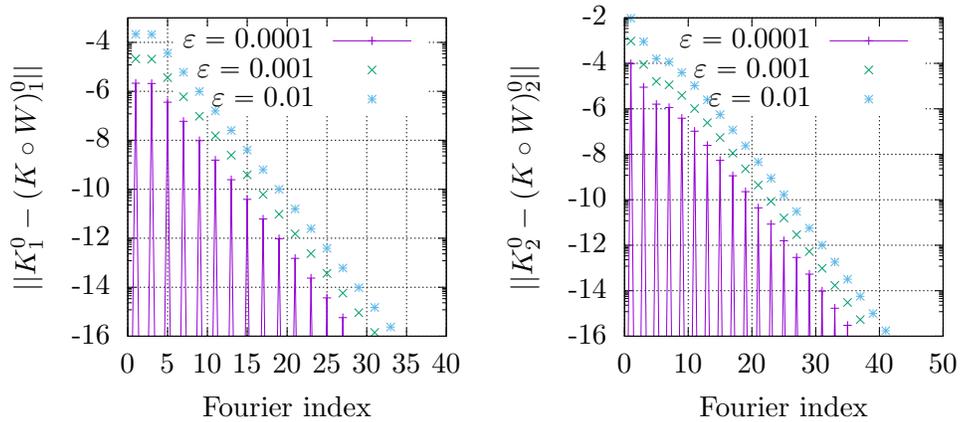}
  \caption{Log10 scale of the difference between the power spectrum of
    $K^0$ and the power spectrum of $(K\circ W)^0$ for $\mu = 1.5$,
    different values of $\varepsilon$ and constant delay $r = 0.006$
    in eq.~\eqref{eq.vdp-delayed}}\label{fig.vp-ct-power}
 \end{figure}

 \bibliographystyle{alpha}
 \bibliography{ref.bib}

\newcommand{\etalchar}[1]{$^{#1}$}
\begin{thebibliography}{HKWW06}

\bibitem[AVK87]{AndronovVK87}
A.~A. Andronov, A.~A. Vitt, and S.~\`E. Kha\u{\i}kin.
\newblock {\em Theory of oscillators}.
\newblock Dover Publications, Inc., New York, 1987.
\newblock Translated from the Russian by F. Immirzi, Reprint of the 1966
  translation.

\bibitem[AZ90]{AppellZ90}
J\"{u}rgen Appell and Petr~P. Zabrejko.
\newblock {\em Nonlinear superposition operators}, volume~95 of {\em Cambridge
  Tracts in Mathematics}.
\newblock Cambridge University Press, Cambridge, 1990.

\bibitem[BG69]{Broucke}
R.~Broucke and K.~Garthwhite.
\newblock A programming system for analytical series expansions on a computer.
\newblock {\em Celestial mechanics}, 1(2):271--284, 1969.

\bibitem[BK98]{BeynK98}
Wolf-J\"{u}rgen Beyn and Winfried Kle\ss.
\newblock Numerical {T}aylor expansions of invariant manifolds in large
  dynamical systems.
\newblock {\em Numer. Math.}, 80(1):1--38, 1998.

\bibitem[Car81]{Carr}
Jack Carr.
\newblock {\em Applications of centre manifold theory}, volume~35 of {\em
  Applied Mathematical Sciences}.
\newblock Springer-Verlag, New York-Berlin, 1981.

\bibitem[CCdlL19]{CasalCL2019}
Alfonso Casal, Livia Corsi, and Rafael de~la Llave.
\newblock Expansions in the delay of quasi-periodic solutions for state
  dependent delay equations.
\newblock {\em ArXiv}, (1910.04808), 2019.

\bibitem[CL04]{ChiconeL04}
Carmen Chicone and Weishi Liu.
\newblock Asymptotic phase revisited.
\newblock {\em J. Differential Equations}, 204(1):227--246, 2004.

\bibitem[CR12]{CapinskiR12}
Maciej~J. Capi\'{n}ski and Pablo Rold\'{a}n.
\newblock Existence of a center manifold in a practical domain around {$L_1$}
  in the restricted three-body problem.
\newblock {\em SIAM J. Appl. Dyn. Syst.}, 11(1):285--318, 2012.

\bibitem[dlLO99]{Obaya1999}
R.~de~la Llave and R.~Obaya.
\newblock Regularity of the composition operator in spaces of {H}\"{o}lder
  functions.
\newblock {\em Discrete Contin. Dynam. Systems}, 5(1):157--184, 1999.

\bibitem[dlLP02]{Petrov2002}
Rafael de~la Llave and Nikola~P. Petrov.
\newblock Regularity of conjugacies between critical circle maps: an
  experimental study.
\newblock {\em Experiment. Math.}, 11(2):219--241, 2002.

\bibitem[FJ05]{FFTW2005}
Matteo Frigo and Steven~G. Johnson.
\newblock The design and implementation of {FFTW3}.
\newblock {\em Proceedings of the IEEE}, 93(2):216--231, 2005.
\newblock Special issue on ``Program Generation, Optimization, and Platform
  Adaptation''.

\bibitem[Guc75]{Guckenheimer}
J.~Guckenheimer.
\newblock Isochrons and phaseless sets.
\newblock {\em J. Math. Biol.}, 1(3):259--273, 1974/75.

\bibitem[GW08]{Griewank2008}
Andreas Griewank and Andrea Walther.
\newblock {\em Evaluating derivatives}.
\newblock Society for Industrial and Applied Mathematics (SIAM), Philadelphia,
  PA, second edition, 2008.
\newblock Principles and techniques of algorithmic differentiation.

\bibitem[HCF{\etalchar{+}}16]{Haro2016}
\`Alex Haro, Marta Canadell, Jordi-Llu\'{\i}s Figueras, Alejandro Luque, and
  Josep-Maria Mondelo.
\newblock {\em The parameterization method for invariant manifolds}, volume 195
  of {\em Applied Mathematical Sciences}.
\newblock Springer, [Cham], 2016.
\newblock From rigorous results to effective computations.

\bibitem[HdlL13]{Huguet2013}
Gemma Huguet and Rafael de~la Llave.
\newblock Computation of limit cycles and their isochrons: fast algorithms and
  their convergence.
\newblock {\em SIAM J. Appl. Dyn. Syst.}, 12(4):1763--1802, 2013.

\bibitem[HG15]{Hou2015}
Aiyu Hou and Shangjiang Guo.
\newblock Stability and {H}opf bifurcation in van der {P}ol oscillators with
  state-dependent delayed feedback.
\newblock {\em Nonlinear Dynam.}, 79(4):2407--2419, 2015.

\bibitem[HKWW06]{Krisztin2006}
Ferenc Hartung, Tibor Krisztin, Hans-Otto Walther, and Jianhong Wu.
\newblock Functional differential equations with state-dependent delays: theory
  and applications.
\newblock In {\em Handbook of differential equations: ordinary differential
  equations. {V}ol. {III}}, Handb. Differ. Equ., pages 435--545.
  Elsevier/North-Holland, Amsterdam, 2006.

\bibitem[HVL93]{HaleVL}
Jack~K. Hale and Sjoerd~M. Verduyn~Lunel.
\newblock {\em Introduction to functional-differential equations}, volume~99 of
  {\em Applied Mathematical Sciences}.
\newblock Springer-Verlag, New York, 1993.

\bibitem[Jor99]{Jorba99}
\`Angel Jorba.
\newblock A methodology for the numerical computation of normal forms, centre
  manifolds and first integrals of {H}amiltonian systems.
\newblock {\em Experiment. Math.}, 8(2):155--195, 1999.

\bibitem[KKP09]{Keiner2009}
Jens Keiner, Stefan Kunis, and Daniel Potts.
\newblock Using {NFFT} 3---a software library for various nonequispaced fast
  {F}ourier transforms.
\newblock {\em ACM Trans. Math. Software}, 36(4):Art. 19, 30, 2009.

\bibitem[Knu81]{Knuth1981}
Donald~E. Knuth.
\newblock {\em The art of computer programming. {V}ol. 2}.
\newblock Addison-Wesley Publishing Co., Reading, Mass., second edition, 1981.
\newblock Seminumerical algorithms, Addison-Wesley Series in Computer Science
  and Information Processing.

\bibitem[LI73]{Lanford73}
Oscar~E. Lanford~{I}{I}{I}.
\newblock Bifurcation of periodic solutions into invariant tori: the work of
  {R}uelle and {T}akens.
\newblock In Ivar Stakgold, Daniel~D. Joseph, and David~H. Sattinger, editors,
  {\em Nonlinear problems in the Physical Sciences and biology: {P}roceedings
  of a {B}attelle summer institute}, pages 159--192, Berlin, 1973.
  Springer-Verlag.
\newblock Lecture Notes in Mathematics, Vol. 322.

\bibitem[Min62]{Minorsky}
Nicolas Minorsky.
\newblock {\em Nonlinear oscillations}.
\newblock D. Van Nostrand Co., Inc., Princeton, N.J.-Toronto-London-New York,
  1962.

\bibitem[PR06]{PotzscheR06}
Christian P\"{o}tzsche and Martin Rasmussen.
\newblock Taylor approximation of integral manifolds.
\newblock {\em J. Dynam. Differential Equations}, 18(2):427--460, 2006.

\bibitem[Sij85]{Sijbrand85}
Jan Sijbrand.
\newblock Properties of center manifolds.
\newblock {\em Trans. Amer. Math. Soc.}, 289(2):431--469, 1985.

\bibitem[Ste70]{Stein70}
Elias~M. Stein.
\newblock {\em Singular integrals and differentiability properties of
  functions}.
\newblock Princeton Mathematical Series, No. 30. Princeton University Press,
  Princeton, N.J., 1970.

\bibitem[vdP20]{Vanderpol1920}
Balthasar van~der Pol.
\newblock A theory of the amplitude of free and forced triode vibrations.
\newblock {\em Radio Review.}, 1(701-710):754--762, 1920.

\bibitem[Win75]{Winfree}
A.~T. Winfree.
\newblock Patterns of phase compromise in biological cycles.
\newblock {\em J. Math. Biol.}, 1(1):73--95, 1974/75.

\bibitem[YGdlL20]{Jiaqi2020}
Jiaqi Yang, Joan Gimeno, and Rafael de~la Llave.
\newblock Parameterization method for state dependent delay perturbation of an
  ordinary differential equation.
\newblock {\em --}, --(-):--, 2020.

\end{thebibliography}
\end{document}